\def\titlerunning#1{\gdef\titrun{#1}}
\def\author#1{
    \gdef\autrun{
        \def\and{\unskip, }#1
    }
    \gdef\@author{#1}
}
\def\email#1{Email: #1}
\def\emails#1{Emails: #1}
\def\keywords#1{
    \par\medskip
    \noindent\textbf{Keywords.} #1
}
\def\subjclass#1{{
    \renewcommand{\thefootnote}{}%
    \footnote{
        \emph{Mathematics Subject Classification (2020):} #1
    }
}}
\newtheorem{theorem}{Theorem}[section]
\newtheorem{lemma}[theorem]{Lemma}
\newtheorem{proposition}[theorem]{Proposition}
\newtheorem{corollary}[theorem]{Corollary}
\theoremstyle{definition}
\newtheorem{definition}[theorem]{Definition}
\newtheorem{remark}[theorem]{Remark}
\newtheorem{example}[theorem]{Example}
\numberwithin{equation}{section}
\begin{document}
%%%%%%%%%%%%%%%%%%%%%%%%%%%%%%%%
% 5.1. Title and Authors
\titlerunning{}
\title{Sampling Observability for Heat Equations with Memory }

\author{
LingyingMa\footnotemark[2]
 \and
Gengsheng Wang\footnotemark[2]
 \and
Yubiao Zhang\footnotemark[2] \footnotemark[3] \footnotemark[1]
}

\renewcommand{\thefootnote}{\fnsymbol{footnote}}

\footnotetext[1]{
    Corresponding author. 
    \email{\texttt{yubiao\b{ }zhang@yeah.net}}
}
\footnotetext[2]{
    Center for Applied Mathematics and KL-AAGDM, 
    Tianjin University,
    300072 Tianjin,  China. 
    \emails{\texttt{ly\_ma@tju.edu.cn}, \texttt{wanggs62@yeah.net}} 
    %\email{\texttt{wanggs62@yeah.net}}
}
\footnotetext[3]{
    Chair for Dynamics, Control, Machine Learning  and Numerics, 
    Department of Mathematics,  
    Friedrich-Alexander-Universit\"{a}t Erlangen-N\"{u}rnberg,
    91058 Erlangen, Germany. 
    %\email{\texttt{yubiao\_zhang@yeah.net}}
}

% \author{
%     Lingying Ma\thanks{Center for Applied Mathematics, Tianjin University, 300072 Tianjin, China. e-mail: {\tt\small ly\_ma@tju.edu.cn}} 
% \and 
%     Gengsheng Wang\thanks{Center for Applied Mathematics, Tianjin University, 300072 Tianjin, China. e-mail: {\tt\small wanggs62@yeah.net}}
% \and 
%     Yubiao Zhang\thanks{Corresponding author. e-mail: {\tt\small yubiao\_zhang@yeah.net}}  \thanks{Chair for Dynamics, Control, Machine Learning  and Numerics, Alexander von Humboldt-Professorship, Department of Mathematics,  Friedrich-Alexander-Universit\"{a}t Erlangen-N\"{u}rnberg,91058 Erlangen, Germany}
% }

\date{}
\maketitle
%%%%%%%%%%%%%%%%%%%%%%%%%%
% 5.2. Abstract, Keywords and MSC.
\begin{abstract}
     This paper studies the sampling observability for the heat equations with  memory in the lower-order term, where the observation is conducted at a finite number of time instants and on a small open subset at each time instant.
        We  present a two-sided sampling observability inequality and
     give a sharp sufficient  condition   to ensure
    the aforementioned inequality. We  also provide a method   to select the time instants and then to design the observation regions, based on a given memory kernel, such that the above-mentioned inequality holds for these time instants and  observation regions. 
    Additionally, we demonstrate that the positions of these time instants depend significantly on the memory kernel.
\end{abstract}

\keywords{
    Heat equations with memory, 
    observability, 
    sampling, 
    geometric characterization of observation sets.
}

\subjclass{
    93B07 % Observability
    % 93B05 % Controllability
    45K05 % Integro-partial differential equations
    95C57 % Sampled-data control/observation systems
    %35K05 % Heat equations
    %58J47 % Propagation of singularities; initial value problems on manifolds
}

%\tableofcontents

%%%%%%%%%%%%%%%%%%%%%%%%%%%%%%%%%%%%%
%% 5.3. Main contents
\section{Introduction}

 Let  $\Omega\subset\mathbb R^n$ ($n\in \mathbb N^+:=\{1,2,3,\dots\}$)  be a bounded domain with its $C^2$-boundary $\partial\Omega$.
 Consider  the   heat equation with  memory:
\begin{eqnarray}\label{20240215-OriginalEquation}
\begin{cases}
    \partial_ty(t,x) -  \Delta y(t,x) + \int_0^t M(t-s) y(s,x) ds=0,                          ~(t,x)\in\mathbb{R}^+\times\Omega,\\
    y(t,x)=0, ~(t,x)\in\mathbb{R}^+\times\partial\Omega,\\
    y(0,x) = y_0(x),~x\in\Omega,
\end{cases}
% \left\{
% \begin{array}{ll}
% \partial_t y  - \Delta y + \displaystyle\int_0^t M(t-s) y(s,x)ds = 0,
% ~&(t,x)\in (0,+\infty) \times \Omega,\\
% y(t,x)=0,~&(t,x)\in (0,+\infty) \times \partial\Omega,  \\
% y(0,\cdot)=y_0(\cdot) \in L^2(\Omega).
% \end{array}
% \right.
\end{eqnarray}
where $y_0$ is the initial datum taken from $L^2(\Omega)$, $\mathbb{R}^+:=(0,+\infty)$ and $M$ is a memory kernel, which satisfies the following assumption:
\begin{itemize}
	\item[]  $(\mathcal{A})$ The time-dependent memory kernel $M$ belongs to the space $C^2([0,+\infty))$.
\end{itemize}
% The solution to system \eqref{20240215-OriginalEquation} is regarded as a function of the time variable in $C([0,+\infty);L^2(\Omega))$.
By a similar method used in \cite[Theorem 1.2 on Page 184]{Pazy-Springer-1983}, the equation \eqref{20240215-OriginalEquation} has a unique solution  $y(\cdot;y_0)\in C([0,+\infty);L^2(\Omega))$.

The aim of this paper is
to study the observability for the equation \eqref{20240215-OriginalEquation}, where the observation is conducted at a finite number of time instants
$t_j$ ($j=1,\dots,m$, with $m\in \mathbb{N}^+$), and on $\omega_j$ at each time $t_j$, where $\omega_j$ is an open subset of $\overline{\Omega}$.
We will present a two-sided sampling observability inequality and
 give a sharp sufficient  condition   on $\{t_j\}_{j=1}^m$, $\{\omega_j\}_{j=1}^m$ and $M$ to ensure
the aforementioned inequality. We will also provide a  way to select the time instants and then  design the observation regions, based on a given memory kernel.

The observability is an important subject in  control theory, as well as the inverse problems. There have been much
 literature on this issue. We refer to \cite[Section 1.4, pp. 11-14]{TucsnakWeiss-2009} for the observability of ODEs,  \cite{AEWZ-JEMS-2014, FZ-AIHPANL-2000,FI-LectureNotes-1996} and \cite{BLR-SICON-1992, Zhang-SICON-2000} for PDEs.

 Compared with the understanding of the observability for PDEs without memory, that for  PDEs with memory is still very immature.
 Due to the involvement of the history values, the latter is more difficult.
 For proper time-invariant observation regions, the null observability does not hold for heat equations with certain memory kernels (see, for instance, \cite{GI-ESAIMCOCV-2013, HalanayPandolfi-SCL-2012, ZhouGao-CMA-2014}). However, it was pointed out in \cite{CZZ-SICON-2017} that when an observation region moves along the time, the null observability is true for heat equations with polynomial-like memory kernels in lower order terms. This idea was absorbed in \cite{WZZ-arXiv-2023} to establish a sufficient and necessary condition on observation regions for the exact observability with observations made over time intervals away from the initial time.
  However, in these works, the observations are conducted over a time interval.

  About the studies on  the sampling observability of partial differential equations without memory,
  there have been much literature. Here,
  we would like to mention the following works: The papers \cite[Theorem 1]{WangLiYinGuoXu-2011} and \cite[Theorem 2.2]{QinWang-JDE-2017} investigate linear autonomous ODEs where the observation is made at finite time instants; 
  the paper \cite[Theorem 5.2]{QinWang-JDE-2017} studies coupled heat equations where the observation is made at a finite number of time instants; papers \cite{AEWZ-JEMS-2014, PW-JEMS-2013} deal with  heat equations  where the observation is made at one time instant; the paper \cite{Zhang-CRMASP-2016} concerns Kolmogorov equations  where the observation is made at one time instant;
  papers \cite{HuangSoffer-AJM-2021, WangWangZhang-JEMS-2019} concern Schr\"{o}dinger equations where the observation is made at two time instants.

   To the best of our knowledge, there has not been any published literature on the observability for the heat equation with memory where the observation is conducted impulsively at a finite number of time instants.

Before stating our main results, we will introduce some notation and definitions. Let $A$ be a linear  operator on $L^2(\Omega)$, defined by
\begin{align*}
     A f := \Delta f, \;\; f \in H^2(\Omega)  \cap H_0^1(\Omega).
\end{align*}
Write $(\lambda_k)_{k\geq 1}$ and $(e_k)_{k\geq 1}$ for the eigenvalues and the corresponding eigenfunctions (normalized in $L^2(\Omega)$)
of $-A$.
For each $s\in \mathbb R$, we define the following Hilbert space:
\begin{align*}
    \mathcal H^s := \bigg\{
            \sum_{k\geq 1} a_k e_k  ~:~a_k \in \mathbb{R} ~(k\in \mathbb N^+)
            ~~\text{and}~~
           \sum_{k\geq 1} a_k^2\lambda_k^{s}<+\infty
        \bigg\},
\end{align*}
equipped with the inner product
\begin{align*}
    \langle f, g \rangle_{\mathcal H^s} := \sum_{k\geq 1} a_k b_k \lambda_k^s\;\;\mbox{for all}\;\;f= \sum_{k\geq 1} a_k e_k \in \mathcal H^s
    \;\;\mbox{and}\;\;g= \sum_{k\geq 1} b_k e_k \in \mathcal H^s.
    % ~~\text{when}~
    % f= \sum_{k\geq 1} a_k e_k \in \mathcal H^s
    % ~\text{and}~
    % g= \sum_{k\geq 1} b_k e_k \in \mathcal H^s.
\end{align*}

\begin{definition}(Two-sided sampling observability inequality)
$(i)$  Equation \eqref{20240215-OriginalEquation} is said to satisfy the two-sided sampling observability inequality, if there is
$m\in \mathbb{N}^+$, $\{t_j\}_{j=1}^m\subset \mathbb{R}^+$ and nonempty open subsets  $\omega_1,\omega_2,\ldots,\omega_m $ of $\overline{\Omega}$,
such that for some $C>0$,
\begin{align}\label{20240226-SamplingObservability}
                C^{-1} \| y_0 \|_{\mathcal H^{-4}}
                \leq
                \sum_{j=1}^m  \| y(t_j;y_0) \|_{L^2(\omega_j)}
                \leq C \| y_0 \|_{\mathcal H^{-4}}\;\;\mbox{for any}\;\;y_0 \in L^2(\Omega).
            \end{align}
$(ii)$ When \eqref{20240226-SamplingObservability} holds, we say that  the equation \eqref{20240215-OriginalEquation}
satisfies the two-sided sampling observability inequality at $\{t_j\}_{j=1}^m$ and $\{\omega_j\}_{j=1}^m$.
\end{definition}

\begin{remark}\label{remark-two-side}
$(i)$ The first inequality in \eqref{20240226-SamplingObservability} means that by measuring a solution to the equation \eqref{20240215-OriginalEquation} on $\omega_j$ at time $t_j$ for $j=1,\dots, m$, one can recover its initial datum in
$\mathcal H^{-4}$, while the second inequality in \eqref{20240226-SamplingObservability} shows that the space $\mathcal H^{-4}$ is optimal for the recovery.

$(ii)$ Here, ``sampling" means sampling at the time instants.

$(iii)$ We explain the reason for the appearance of $~\overline{\Omega}$ as follows: In our study, we will use the propagation of singularities for the equation \eqref{20240215-OriginalEquation} (which has been revealed in \cite{WZZ-JMPA-2022}). Thus, the observation region needs, in general, to cover the characteristic lines on the boundary of $\Omega$.

$(iv)$ In \eqref{20240226-SamplingObservability}, we can replace $y_0\in L^2(\Omega)$  by $y_0\in \mathcal{H}^{-4}$,
through utilizing   the standard density argument, because when $y_0 \in \mathcal H^{-4}$, we have $y(\hat t;y_0)\in L^2(\Omega)$ for each $\hat t>0$ (see   \cite[Corollary 2.2]{WZZ-arXiv-2023}  or Lemma \ref{20240226-proposition-DecompsotionOfSolutions} of the current paper).

%we have proved in \cite[Corollary 2.2]{WZZ-arXiv-2023} that when $y_0 \in \mathcal H^{-4}$, $y(\hat t;y_0)\in L^2(\Omega)$ for each $\hat t>0$ (see Lemma \ref{20240226-proposition-DecompsotionOfSolutions} of the current paper).

$(v)$   By the standard duality argument, we know that when the two-sided sampling observability inequality \eqref{20240226-SamplingObservability} holds at $\{t_j\}_{j=1}^m$ and $\{\omega_j\}_{j=1}^m$, the following impulse controllability is true: when $T> \max_{1\leq j \leq m} t_j$, for each $y_0\in L^2(\Omega)$ and each $y_1\in\mathcal{H}^{4}$, there are controls $u_j(x)\in L^2(\Omega)$,  $j=1,\dots,m$, such that the solution $y$ to the equation:
\begin{eqnarray*}
\begin{cases}
    \partial_ty(t,x) -  \Delta y(t,x) + \int_0^t M(t-s) y(s,x) ds
        = \sum_{j=1}^m \delta_{T-t_j} (t) \chi_{\omega_j}(x) u_j(x), 
        \nonumber\\
        \hskip 62pt (t,x)\in(0,T)\times\Omega,\\
    y(t,x)=0, ~(t,x)\in(0,T)\times\partial\Omega,\\
    y(0,x) = y_0(x),~x\in\Omega
    %y(t_j,x)=\chi_{\omega_j}(x)u_j(x),~x\in\Omega,
\end{cases}
\end{eqnarray*}
(where $\delta_{s}$ denotes the Dirac measure at $s$) satisfies that 
%\begin{align*}
  $y(T,x)=y_1(x),~x\in\Omega.$
%\end{align*}
\end{remark}

\begin{definition}\label{def-back}(Backward uniqueness)
 $(i)$ Equation \eqref{20240215-OriginalEquation} is said to have the backward uniqueness property (or the backward uniqueness for short), if
there is $m\in \mathbb{N}^+$ and a sequence $\{t_j\}_{j=1}^m\subset \mathbb{R}^+$ such that when $  y_0\in L^2(\Omega)$,
\begin{align}\label{20240215-BackwardUniqueness}
    y(t_j;y_0)=0 ~\text{in}~ L^2(\Omega)
    ~\text{for each}~ j=1,\ldots,m
    ~~\Rightarrow~~
    y_0=0 ~\text{in}~ L^2(\Omega).
\end{align}
$(ii)$ When \eqref{20240215-BackwardUniqueness} holds for some $\{t_j\}_{j=1}^m\subset \mathbb{R}^+$, with  $m\in \mathbb{N}^+$, we say that
 the equation \eqref{20240215-OriginalEquation} has the backward uniqueness at $\{t_j\}_{j=1}^m$.
\end{definition}

\begin{remark}\label{remark-back}
$(i)$ We recall the backward uniqueness of the pure heat equation: if a solution $\hat{y}$ to the pure heat equation (i.e., \eqref{20240215-OriginalEquation} where $M=0$) satisfies that for some $T>0$, $\hat{y}(T,\cdot)=0$ in $L^2(\Omega)$, then $\hat{y}(0,\cdot)=0$ in $L^2(\Omega)$. 
Differing from the pure heat equation,  the equation \eqref{20240215-OriginalEquation} may have no the above backward uniqueness for some
kernel $M$ with the assumption $(\mathcal{A})$. That is why we give Definition \ref{def-back}.
In  Section \ref{FurtherStudies}, we will further study the backward uniqueness for the equation  \eqref{20240215-OriginalEquation}
with several  special kinds of $M$.

$(ii)$ For general memory kernel $M$, the backward uniqueness for the equation \eqref{20240215-OriginalEquation}
deserves to be studied in depth.

$(iii)$   The backward uniqueness plays an important role in the studies of  the inequality \eqref{20240226-SamplingObservability}. As we will see in Remark \ref{remark-sec-resu}, the backward uniqueness  for
      the equation \eqref{20240215-OriginalEquation}  at
     $\{t_j\}_{j=1}^m$ is necessary to ensure  the inequality \eqref{20240226-SamplingObservability}, regardless of how the sequence $\{\omega_j\}_{j=1}^m$ is chosen.

     % $(iv)$ In Definition \ref{def-back},  ``$\;y_0\in L^2(\Omega)$'' can be replaced by ``$\;y_0\in \mathcal{H}^{-4}$''.
     % {\bf Yubiao, Am I right here?}
\end{remark}

\begin{definition}(Geometric observation condition)
Let $m\in \mathbb{N}^+$. Let $\{t_j\}_{j=1}^m\subset \mathbb{R}^+$ and let $\omega_1,\omega_2,\ldots,\omega_m $
be open subsets of  $\overline{\Omega}$.

$(i)$ We say that $\{\omega_j\}_{j=1}^m$ satisfies the geometric observation condition for the equation \eqref{20240215-OriginalEquation}
at $\{t_j\}_{j=1}^m$, if
\begin{align}\label{20240215-GeometricObservationCondition}
             \sum_{j=1}^m |M(t_j)| ~ \chi_{\omega_j}(x)   >  0
             ~\text{for each}~  x\in \overline{\Omega}.
                  \end{align}

        $(ii)$ We say that $\{\omega_j\}_{j=1}^m$ satisfies the weak geometric observation condition for the equation \eqref{20240215-OriginalEquation}
at $\{t_j\}_{j=1}^m$, if
\begin{align}\label{20240215-NecessaryGeometricObservationCondition}
            \sum_{j=1}^m |M(t_j)| ~ \chi_{\omega_j}(x)   >  0
             ~\text{for a.e.}~  x\in \overline{\Omega}.
            % \text{essinf}_{x\in\Omega} \bigg( \sum_{j=1}^m |M(t_j)| ~ \chi_{\omega_j}(x)
            % \bigg)    > 0
        \end{align}
\end{definition}
\begin{remark}%\label{remark-geo-cond}
$(i)$ The gap between the geometric observation condition and the weak geometric observation condition
is quite small.

$(ii)$ Both   the geometric observation condition and the weak geometric observation condition are related to $M$, $\{t_j\}_{j=1}^m$, $\{\omega_j\}_{j=1}^m$ and $\Omega$.
\end{remark}

We now present the   first main result of this paper.

\begin{theorem}\label{20240215-FirstMainTheorem-SamplingObservability}
Let $M$ satisfy the assumption $(\mathcal{A})$. Let $m\in \mathbb{N}^+$. Let $\{t_j\}_{j=1}^m\subset \mathbb{R}^+$ and let $\omega_1,\omega_2,\ldots,\omega_m $
be nonempty and open subsets of  $\overline{\Omega}$. Suppose that the equation \eqref{20240215-OriginalEquation} has the backward uniqueness at $\{t_j\}_{j=1}^m$.
Then, each of the following statements implies the next one:
    \begin{itemize}%[leftmargin=4em]
        \item[(i)] The sequence $\{\omega_j\}_{j=1}^m$ satisfies the geometric observation condition
       for the equation \eqref{20240215-OriginalEquation}
at $\{t_j\}_{j=1}^m$, i.e., \eqref{20240215-GeometricObservationCondition} holds.

        \item[(ii)]  Equation \eqref{20240215-OriginalEquation} satisfies
         the two-sided sampling observability inequality at  $\{t_j\}_{j=1}^m$ and $\{\omega_j\}_{j=1}^m$, i.e., \eqref{20240226-SamplingObservability} holds.

        \item[(iii)] The sequence $\{\omega_j\}_{j=1}^m$ satisfies the weak geometric observation condition
       for the equation \eqref{20240215-OriginalEquation}
at $\{t_j\}_{j=1}^m$, i.e., \eqref{20240215-NecessaryGeometricObservationCondition} holds.

\end{itemize}
\end{theorem}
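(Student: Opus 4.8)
The whole argument runs on the decomposition of solutions (Lemma~\ref{20240226-proposition-DecompsotionOfSolutions}): for $t>0$ one may write $y(t;y_0)=-M(t)A^{-2}y_0+\widehat y(t;y_0)$, where $y_0\mapsto y(t;y_0)$ is bounded from $\mathcal H^{-4}$ to $L^2(\Omega)$ while $\widehat y(t;\cdot)\colon\mathcal H^{-4}\to L^2(\Omega)$ is compact. Heuristically, with $y_0=\sum_k a_ke_k$ one has $y(t;y_0)=\sum_k a_kg_k(t)e_k$, and a perturbative analysis of the scalar Volterra problem satisfied by $g_k$ gives $g_k(t)=-M(t)\lambda_k^{-2}+O(\lambda_k^{-3})+O(\lambda_k^{-1}e^{-\lambda_k t})$ as $k\to\infty$, so $\widehat y(t;\cdot)$ even maps $\mathcal H^{-4}$ boundedly into $\mathcal H^{2}$, which embeds compactly in $L^2(\Omega)$. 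Since $\|A^{-2}y_0\|_{L^2(\Omega)}=\|y_0\|_{\mathcal H^{-4}}$, the operator $A^{-2}$ is an isometric isomorphism of $\mathcal H^{-4}$ onto $L^2(\Omega)$, which explains the appearance of $\mathcal H^{-4}$. Throughout we may test \eqref{20240226-SamplingObservability} on $y_0\in\mathcal H^{-4}$ by Remark~\ref{remark-two-side}(iv); we prove $(i)\Rightarrow(ii)$ and $(ii)\Rightarrow(iii)$ in turn.

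\textbf{$(i)\Rightarrow(ii)$.} The upper bound follows at once from $\|y(t_j;y_0)\|_{L^2(\omega_j)}\le\|y(t_j;y_0)\|_{L^2(\Omega)}\le C\|y_0\|_{\mathcal H^{-4}}$ (see also \cite[Corollary 2.2]{WZZ-arXiv-2023}). For the lower bound, observe that $\phi:=\sum_{j}|M(t_j)|\chi_{\omega_j}$ is lower semicontinuous and strictly positive on the compact set $\overline\Omega$, hence $\phi\ge c_0>0$; since at each point at most $m$ terms occur, some $j$ has $x\in\omega_j$ and $|M(t_j)|\ge c_0/m$, so $\sum_j|M(t_j)|^2\chi_{\omega_j}\ge c_1:=(c_0/m)^2>0$ on $\overline\Omega$. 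Thus $\sum_j|M(t_j)|^2\|f\|_{L^2(\omega_j)}^2\ge c_1\|f\|_{L^2(\Omega)}^2$ for every $f\in L^2(\Omega)$; taking $f=A^{-2}y_0$, using $\sum_j b_j\ge(\sum_j b_j^2)^{1/2}$ for $b_j\ge0$, and inserting the decomposition yields
\[
\sum_{j=1}^m\|y(t_j;y_0)\|_{L^2(\omega_j)}\ \ge\ c_1^{1/2}\,\|y_0\|_{\mathcal H^{-4}}-\sum_{j=1}^m\|\widehat y(t_j;y_0)\|_{L^2(\Omega)}.
\]
Since $y_0\mapsto(\widehat y(t_j;y_0))_{j}$ is compact on $\mathcal H^{-4}$, this is observability up to a compact term, and by the standard compactness–uniqueness scheme (take a would‑be counterexample sequence of unit $\mathcal H^{-4}$‑norm with vanishing observation, pass to a weak limit, use that the compact term then converges strongly and dominates, producing a nonzero element of the kernel of the observation map) the lower bound in \eqref{20240226-SamplingObservability} follows as soon as the observation map $L\colon y_0\mapsto(y(t_j;y_0)|_{\omega_j})_{j=1}^m$ is injective.

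\textbf{Injectivity of $L$ — the crux.} Suppose $y(t_j;y_0)=0$ in $L^2(\omega_j)$ for all $j$. For each $j$ with $M(t_j)\ne0$, the term $-M(t_j)A^{-2}y_0$ is the principal (most singular) part of $y(t_j;y_0)$, so the propagation of singularities for \eqref{20240215-OriginalEquation} proved in \cite{WZZ-JMPA-2022} shows that $y(t_j;y_0)$ and $y_0$ carry the same (analytic) singularities; as $y(t_j;y_0)\equiv0$ on the nonempty open set $\omega_j\cap\Omega$, this forces $y_0$ to be real‑analytic on $\omega_j\cap\Omega$. By \eqref{20240215-GeometricObservationCondition} one has $\bigcup_{j:\,M(t_j)\ne0}\omega_j=\overline\Omega$, so $y_0$ is real‑analytic on all of $\Omega$; hence each $y(t_j;y_0)$ is real‑analytic on the connected set $\Omega$ and, vanishing on $\omega_j\cap\Omega\ne\emptyset$, vanishes on $\Omega$. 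The backward uniqueness at $\{t_j\}_{j=1}^m$ then gives $y_0=0$, so $L$ is injective. This is precisely where the closed domain $\overline\Omega$ and the boundary characteristic lines flagged in Remark~\ref{remark-two-side}(iii) enter; I expect this unique‑continuation input — the passage from vanishing of $y(t_j;\cdot)$ on the open pieces $\omega_j$ covering $\overline\Omega$ to vanishing on $\Omega$ — to be the real difficulty, the rest being soft functional analysis.

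\textbf{$(ii)\Rightarrow(iii)$.} Argue by contraposition: if \eqref{20240215-NecessaryGeometricObservationCondition} fails, then $G:=\bigcup_{j:\,M(t_j)\ne0}\omega_j$ has $|\Omega\setminus G|=|\overline\Omega\setminus G|>0$, so $L^2(\Omega\setminus G)$ is infinite‑dimensional. Choose an orthonormal sequence $(f_n)$ in $L^2(\Omega\setminus G)$, extended by $0$ to $\Omega$: then $\|f_n\|_{L^2(\Omega)}=1$, $f_n\rightharpoonup0$ in $L^2(\Omega)$, and $f_n\equiv0$ on $G$. Put $y_0^{(n)}:=A^{2}f_n\in\mathcal H^{-4}$, so $\|y_0^{(n)}\|_{\mathcal H^{-4}}=1$ and, $A^{-2}$ being an isometric isomorphism, $y_0^{(n)}\rightharpoonup0$ in $\mathcal H^{-4}$. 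By the decomposition $y(t_j;y_0^{(n)})=-M(t_j)f_n+\widehat y(t_j;y_0^{(n)})$; on $\omega_j$ the first term vanishes (either $M(t_j)=0$, or $M(t_j)\ne0$ and then $\omega_j\subseteq G$, where $f_n\equiv0$), while $\|\widehat y(t_j;y_0^{(n)})\|_{L^2(\Omega)}\to0$ by compactness of $\widehat y(t_j;\cdot)$. Hence $\sum_{j}\|y(t_j;y_0^{(n)})\|_{L^2(\omega_j)}\to0$ while $\|y_0^{(n)}\|_{\mathcal H^{-4}}=1$, contradicting the first inequality in \eqref{20240226-SamplingObservability}. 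Therefore \eqref{20240215-NecessaryGeometricObservationCondition} holds.
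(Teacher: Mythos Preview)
Your overall architecture is right: the decomposition $y(t;y_0)=-M(t)A^{-2}y_0+\widehat y(t;y_0)$ with $\widehat y(t;\cdot)$ compact from $\mathcal H^{-4}$ to $L^2(\Omega)$ reduces $(i)\Rightarrow(ii)$ to injectivity of the observation map, and your $(ii)\Rightarrow(iii)$ is correct and close in spirit to the paper's (the paper builds test data from rescaled characteristic functions of shrinking balls at a Lebesgue point, you use an orthonormal sequence supported in $\Omega\setminus G$; both produce $y_0^{(n)}\rightharpoonup0$ in $\mathcal H^{-4}$ with unit norm and vanishing principal part on each $\omega_j$).

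The genuine gap is the injectivity step. You claim that propagation of singularities makes $y_0$, and then each $y(t_j;y_0)$, real-analytic on $\Omega$, so vanishing on $\omega_j$ spreads by analytic continuation. This fails for two independent reasons. First, the result in \cite{WZZ-JMPA-2022} concerns the Sobolev order of the leading term in \eqref{20240229-DecompositionOfSolutions}, not the analytic wave front set; under assumption $(\mathcal A)$ the kernel $M$ is only $C^2$. Second --- and this is decisive even for analytic $M$ --- solutions of \eqref{20240215-OriginalEquation} are \emph{not} real-analytic in $x$ for generic data: your own asymptotics $x_k(t)\sim -M(t)\lambda_k^{-2}$ show that the Fourier coefficients $a_kx_k(t)$ of $y(t;y_0)$ decay only polynomially in $\lambda_k$, never exponentially. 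So $y(t_j;y_0)\in\mathcal H^{4}$ at best, and vanishing on an open subset says nothing about the rest of $\Omega$. The analytic-continuation step simply has no footing here.

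The paper's route to injectivity (Proposition~\ref{20240228-proposition-UniqueContinuation}) avoids this by reducing to eigenfunctions, where analyticity \emph{is} available. It shows that the null space $V=\{y_0\in\mathcal H^{-4}:\chi_{\omega_j}y(t_j;y_0)=0\ \forall j\}$ is (a) finite-dimensional --- essentially your compactness argument --- and (b) $A$-invariant: from $\chi_{\omega_j}y(t_j;y_0)=0$ and the decomposition one gets $M(t_j)A^{-2}y_0\in H^2(\omega_j)$, and the covering $\overline\Omega=\bigcup_{M(t_j)\neq0}\omega_j$ plus a partition of unity patches this into $A^{-2}y_0\in\mathcal H^2$ globally, i.e.\ $Ay_0\in\mathcal H^{-4}$; since $A$ commutes with the flow, $Ay_0\in V$. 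If $V\neq\{0\}$, the finite-dimensional operator $A|_V$ has an eigenvector $e\in V$; backward uniqueness forces the scalar factor $\xi(t_{j_0})\neq0$ for some $j_0$, whence $e|_{\omega_{j_0}}=0$, and \emph{now} unique continuation for eigenfunctions of $-\Delta$ (elliptic, hence real-analytic) gives $e=0$, a contradiction. In short: you correctly located the crux, but tried to run analytic continuation on the full solution; the paper first squeezes the problem down to an eigenfunction, where that tool is legitimate.
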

\begin{remark}%\label{remark-first-thm}

$(i)$ Theorem \ref{20240215-FirstMainTheorem-SamplingObservability} shows  a sharp sufficient condition to ensure
 the two-sided sampling observability inequality \eqref{20240226-SamplingObservability}, under the assumption that  the equation \eqref{20240215-OriginalEquation} has the backward uniqueness at $\{t_j\}_{j=1}^m$.

 $(ii)$ It is worth mentioning the following: In Theorem \ref{20240215-FirstMainTheorem-SamplingObservability}, we
  assumed that the equation \eqref{20240215-OriginalEquation} has the backward uniqueness at $\{t_j\}_{j=1}^m$.
However, as we mentioned in Remark \ref{remark-back}, for some $M$, the equation \eqref{20240215-OriginalEquation} has no usual backward uniqueness. Thus, how to choose  impulse instants $\{t_j\}_{j=1}^m$ is an important issue.

 $(iii)$ When the geometric observation condition holds, we must have  $\overline{\Omega}  \subset    \cup_{j=1}^m \omega_j$ (see \eqref {20240215-GeometricObservationCondition}),
 while when the weak geometric observation condition holds, it follows from  \eqref{20240215-NecessaryGeometricObservationCondition}
 that  $\cup_{j=1}^m \omega_j$ differs from $\overline{\Omega}$ only by a  set of measure zero. Thus, it follows from Theorem \ref{20240215-FirstMainTheorem-SamplingObservability} that
 to ensure the  two-sided sampling observability inequality \eqref{20240226-SamplingObservability},  $\cup_{j=1}^m \omega_j$ must contain almost $\overline{\Omega}$. 
 %This is the payment for the impulse observation at time instants, instead of on a  time interval.

\end{remark}

The second main result of this paper is as follows:

\begin{theorem}\label{20240318-yubiao-MeomeryKernelDeterminesTimeInstants}
Let $M$ satisfy the assumption $(\mathcal{A})$. Let $m\in \mathbb{N}^+$. Let $\{t_j\}_{j=1}^m\subset \mathbb{R}^+$.
Then  the following two statements are equivalent:
    \begin{itemize}
        \item[(i)] There is $\{\omega_j\}_{j=1}^m$, where each $\omega_j$ is a nonempty and open subset of $\overline{\Omega}$, such that
        the equation \eqref{20240215-OriginalEquation} satisfies
         the two-sided sampling observability inequality at  $\{t_j\}_{j=1}^m$ and $\{\omega_j\}_{j=1}^m$, i.e., \eqref{20240226-SamplingObservability} holds.

        \item[(ii)]  Equation \eqref{20240215-OriginalEquation} has the backward uniqueness at $\{ t_{j} \}_{j=1}^m$, and
        it holds that
                   \begin{align}\label{20240318-yubiao-NonzerosOfMemoeryKernel}
                \sum_{j=1}^m |M(t_j)| >0.
            \end{align}
    \end{itemize}
\end{theorem}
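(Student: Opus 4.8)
The plan is to derive this theorem from Theorem \ref{20240215-FirstMainTheorem-SamplingObservability}: the only genuinely new work is to squeeze the scalar condition \eqref{20240318-yubiao-NonzerosOfMemoeryKernel} out of the functional inequality \eqref{20240226-SamplingObservability} in one direction, and to exhibit a concrete admissible family $\{\omega_j\}_{j=1}^m$ in the other direction once \eqref{20240318-yubiao-NonzerosOfMemoeryKernel} and backward uniqueness are in hand.

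First I would prove $(i)\Rightarrow(ii)$. Suppose \eqref{20240226-SamplingObservability} holds at $\{t_j\}_{j=1}^m$ and some nonempty open $\{\omega_j\}_{j=1}^m$. If $y_0\in L^2(\Omega)$ satisfies $y(t_j;y_0)=0$ in $L^2(\Omega)$ for every $j$, then each term $\|y(t_j;y_0)\|_{L^2(\omega_j)}$ vanishes, so the left inequality of \eqref{20240226-SamplingObservability} forces $\|y_0\|_{\mathcal H^{-4}}=0$, and hence $y_0=0$ in $L^2(\Omega)$ (the $\mathcal H^{-4}$-norm is nondegenerate and $L^2(\Omega)\hookrightarrow\mathcal H^{-4}$); thus the equation has the backward uniqueness at $\{t_j\}_{j=1}^m$, which is the first half of $(ii)$. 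With this backward uniqueness now established, Theorem \ref{20240215-FirstMainTheorem-SamplingObservability} applies, and its implication $(ii)\Rightarrow(iii)$ yields the weak geometric observation condition \eqref{20240215-NecessaryGeometricObservationCondition}, i.e. $\sum_{j=1}^m|M(t_j)|\chi_{\omega_j}(x)>0$ for a.e. $x\in\overline\Omega$. Since $\overline\Omega$ has positive Lebesgue measure, this family of inequalities cannot hold if $M(t_j)=0$ for all $j$; therefore $\sum_{j=1}^m|M(t_j)|>0$, which is \eqref{20240318-yubiao-NonzerosOfMemoeryKernel}, completing $(ii)$.

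Next I would prove $(ii)\Rightarrow(i)$. Assume the equation has the backward uniqueness at $\{t_j\}_{j=1}^m$ and that \eqref{20240318-yubiao-NonzerosOfMemoeryKernel} holds; pick $j_0$ with $M(t_{j_0})\neq0$. I would set $\omega_{j_0}:=\overline\Omega$ (which is a nonempty open subset of $\overline\Omega$ in its own relative topology — this choice is in fact forced when $m=1$), and for each $j\neq j_0$ take $\omega_j$ to be any nonempty open subset of $\overline\Omega$, e.g. the intersection of $\overline\Omega$ with a small ball. Then for every $x\in\overline\Omega$, $\sum_{j=1}^m|M(t_j)|\chi_{\omega_j}(x)\geq|M(t_{j_0})|\chi_{\overline\Omega}(x)=|M(t_{j_0})|>0$, so $\{\omega_j\}_{j=1}^m$ satisfies the geometric observation condition \eqref{20240215-GeometricObservationCondition}. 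Since backward uniqueness at $\{t_j\}_{j=1}^m$ holds by hypothesis, Theorem \ref{20240215-FirstMainTheorem-SamplingObservability} applies, and its implication $(i)\Rightarrow(ii)$ gives that the equation satisfies the two-sided sampling observability inequality at $\{t_j\}_{j=1}^m$ and $\{\omega_j\}_{j=1}^m$, which is exactly statement $(i)$.

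The only delicate points are the transfer of backward uniqueness from the functional inequality (immediate from the left-hand bound) and the observation that $\overline\Omega$ itself is an admissible observation region. Since the substantive analysis — the propagation-of-singularities machinery behind Theorem \ref{20240215-FirstMainTheorem-SamplingObservability} and the decomposition of solutions in Lemma \ref{20240226-proposition-DecompsotionOfSolutions} — is already available, I do not expect a serious obstacle here: the argument is essentially a bookkeeping reduction to Theorem \ref{20240215-FirstMainTheorem-SamplingObservability}, the main care being to invoke each implication of that theorem only after its backward-uniqueness hypothesis has been verified.
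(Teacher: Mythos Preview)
Your proposal is correct and follows essentially the same approach as the paper: both directions are reduced to Theorem \ref{20240215-FirstMainTheorem-SamplingObservability}, with backward uniqueness read off directly from the left inequality of \eqref{20240226-SamplingObservability}, and \eqref{20240318-yubiao-NonzerosOfMemoeryKernel} extracted from the weak geometric condition \eqref{20240215-NecessaryGeometricObservationCondition}. In the $(ii)\Rightarrow(i)$ direction the paper chooses an open cover $\{\hat\omega_j\}_{j\in J}$ of $\overline{\Omega}$ indexed by the set $J=\{j:M(t_j)\neq0\}$ and then extends it arbitrarily to all indices, whereas you simply take $\omega_{j_0}=\overline{\Omega}$ for one index with $M(t_{j_0})\neq0$; your choice is a special case of theirs and is marginally more direct, but the logical structure is identical.
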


\begin{remark}\label{remark-sec-resu}
    (i) The statement $(i)$ in Theorem \ref{20240318-yubiao-MeomeryKernelDeterminesTimeInstants} involves not only $\{t_j\}_{j=1}^m\subset \mathbb{R}^+$ and $\{\omega_j\}_{j=1}^m$, but also the memory kernel $M$.     While  the statement $(ii)$ in Theorem \ref{20240318-yubiao-MeomeryKernelDeterminesTimeInstants} is related to $M$ and $\{t_j\}_{j=1}^m\subset \mathbb{R}^+$, but  not  $\{\omega_j\}_{j=1}^m$.

    (ii) Given $M$ with the assumption $(\mathcal{A})$,  Theorem \ref{20240318-yubiao-MeomeryKernelDeterminesTimeInstants} provides a method to choose the time instants and observation regions such that the equation \eqref{20240215-OriginalEquation} satisfies the observability inequality \eqref{20240226-SamplingObservability}:
    First, we select $\{ t_{j} \}_{j=1}^m$ satisfying \eqref{20240318-yubiao-NonzerosOfMemoeryKernel} and  such that the equation \eqref{20240215-OriginalEquation} has the backward uniqueness at $\{ t_{j} \}_{j=1}^m$.
    Second, we use \eqref{20240215-GeometricObservationCondition} to choose    $\{ \omega_{j} \}_{j=1}^m$.

     $(iii)$ From Theorem \ref{20240318-yubiao-MeomeryKernelDeterminesTimeInstants}, we see that  the backward uniqueness  for
      the equation \eqref{20240215-OriginalEquation}  at
     $\{t_j\}_{j=1}^m$ is necessary to ensure  the inequality \eqref{20240226-SamplingObservability}, regardless of how 
     the sequence $\{\omega_j\}_{j=1}^m$ is chosen.
\end{remark}

The main novelties in this paper are summarized as follows:
\begin{itemize}[leftmargin=4em]
 \item[(i)] This paper appears to be the first work dealing with the observability for the heat equation with memory where the observation is conducted impulsively at a finite number of the time instants.

 \item[(ii)] Both two-sided sampling observability inequality \eqref{20240226-SamplingObservability}
    and the geometric observation condition seem to be new.

 \item[(iii)] Both  Theorems \ref{20240215-FirstMainTheorem-SamplingObservability} and \ref{20240318-yubiao-MeomeryKernelDeterminesTimeInstants} are new.

 \item[(iv)] The exploration of the backward uniqueness of \eqref{20240215-OriginalEquation}, as well as Definition 
 \ref{def-back},  seems to be new. 

\end{itemize}

The rest of the paper is organized as follows: Section \ref{RelaxedObservability} presents a relaxed observability inequality and gives
a sufficient and necessary condition to ensure it; Section \ref{UniqueContinuation} studies a unique continuation property at a finite number of  time instants; Section \ref{ProofOfMainResults} is devoted to the proofs of the main results; and Section \ref{FurtherStudies} studies the backward uniqueness at a finite number of  time instants.

% \section{Preliminaries}

% We define the following flow operator $\{\varPhi(t)\}_{t\geq 0}$: for each $t\geq 0$,
% \begin{align}\label{20240215-DefinitionOfFlow}
%     \varPhi(t) y_0 := y(t;y_0),~\forall\, y_0 \in L^2(\Omega).
% \end{align}
% The following result concerns about the expression of the flow $\{\varPhi(t)\}_{t\geq 0}$, and is an extension of \cite[Proposition 4.8]{WZZ-JMPA-2022} to the case that $M\in C([0,+\infty))$. The proof is quite similar and put in the Appendix for the sake of the completeness of this paper.

% \begin{proposition}
%     It holds that for each $t \geq 0$,
% \begin{align}\label{20240215-IntegralExpressionOfFlow}
%     \varPhi(t) = e^{-t A} + \int_0^t K_M(t,s) e^{-s A} ds,~ t \geq 0.
% \end{align}
% Here, the kernel $K_M$ is defined as follows:

% \end{proposition}

\section{Relaxed observability  inequality}
\label{RelaxedObservability}

In this section, we will prove a relaxed observability inequality for the equation \eqref{20240215-OriginalEquation}.
Notice that the equation \eqref{20240215-OriginalEquation} has a unique solution for any  $y_0\in \mathcal H^s$ with  $s\in\mathbb R$. We still denote this solution by $y(\cdot;y_0)$. We write
\begin{align}\label{2.1WANG}
   \Phi(t)y_0= y(t;y_0),  ~t \geq 0.
\end{align}
Then, for each $t \geq 0$,  $\Phi(t)\in \mathcal{L}(\mathcal{H}^s)$ for any $s\in \mathbb{R}$ (see \cite[Proposition 7.1]{WZZ-JMPA-2022}).
The following lemma will play an important role in our studies. It is quoted from  \cite[Corollary 2.2]{WZZ-arXiv-2023} (see also \cite[Theorems 1.1-1.2]{WZZ-JMPA-2022}).

\begin{lemma}\label{20240226-proposition-DecompsotionOfSolutions}
(\cite[Corollary 2.2]{WZZ-arXiv-2023})
    Let $M$ satisfy the assumption $(\mathcal{A})$. Then there is an operator-valued function
      \begin{align}\label{2.1Wang}
   R\in C((0,+\infty); \mathcal L({\mathcal H}^{\hat s})) \cap L^{\infty}_{loc}([0,+\infty); \mathcal L({\mathcal H}^{\hat s})),
\end{align}
          where $\hat s$ can be any real number, such that
\begin{align}\label{20240229-DecompositionOfSolutions}
   \Phi(t) = - M(t) A^{-2} + R(t,A)  (tA)^{-3},\;\;t>0,
\end{align}
where $\Phi(t)$ is given by \eqref{2.1WANG}.
Furthermore, for any $s\in \mathbb{R}$ and $\delta\in (0,1)$, there is  $C=C(s,\delta)>0$ such that
\begin{align}\label{20240229-ContinuityWithImprovedFourOrders}
    \| y(\cdot;y_0) \|_{C([\delta,1/\delta]; \mathcal H^{s+4} )} \leq C \|y_0\|_{\mathcal H^s}
    \;\;\mbox{for each}\;\; y_0 \in \mathcal H^s.
\end{align}
\end{lemma}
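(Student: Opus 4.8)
The plan is to diagonalize the flow in the eigenbasis $(e_k)_{k\ge1}$ of $-A$. Writing $y_0=\sum_{k\ge1}a_k e_k$ and $y(t;y_0)=\sum_{k\ge1}y_k(t)e_k$, the equation \eqref{20240215-OriginalEquation} decouples into the scalar Volterra integro-differential problems
\[
   y_k'(t)+\lambda_k\,y_k(t)+\int_0^t M(t-s)\,y_k(s)\,ds=0,\qquad y_k(0)=a_k\quad(k\ge1),
\]
and by linearity and uniqueness $y_k(t)=a_k\,\rho(t,\lambda_k)$, where $\rho(\cdot,\lambda)$ solves the same scalar problem with datum $1$; thus $\Phi(t)$ acts as the Fourier multiplier $e_k\mapsto\rho(t,\lambda_k)e_k$. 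Since $\lambda_k\ge\lambda_1>0$, the whole lemma reduces to the scalar asymptotic identity
\[
   \rho(t,\lambda)=-\frac{M(t)}{\lambda^{2}}+\frac{r(t,\lambda)}{(t\lambda)^{3}},\qquad t>0,\ \lambda\ge\lambda_1,
\]
with $r$ bounded uniformly in $\lambda\ge\lambda_1$ and locally uniformly in $t\in[0,\infty)$, and continuous in $t$ on $(0,\infty)$: indeed, defining $R(t,A)$ as the multiplier $e_k\mapsto -r(t,\lambda_k)e_k$, the uniform bound on $r$ together with $\lambda_k\to\infty$ gives $R\in C((0,\infty);\mathcal L(\mathcal H^{\hat s}))\cap L^\infty_{loc}([0,\infty);\mathcal L(\mathcal H^{\hat s}))$ for every real $\hat s$, and \eqref{20240229-DecompositionOfSolutions} is then the operator form of the scalar identity.

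To prove the scalar identity I would use the Duhamel representation relative to the heat flow,
\[
   \rho(t,\lambda)=e^{-\lambda t}-\int_0^t e^{-\lambda(t-\tau)}g(\tau,\lambda)\,d\tau,\qquad g(\tau,\lambda):=\int_0^\tau M(\tau-s)\,\rho(s,\lambda)\,ds,
\]
and first establish, by Gronwall's inequality applied to this formula, the a priori bound $\sup_{0\le t\le T}|\rho(t,\lambda)|\le C(T)$ uniform in $\lambda\ge\lambda_1$. The key mechanism is that $e^{-\lambda(t-\tau)}$ is sharply peaked at $\tau=t$: since $g(0,\lambda)=g'(0,\lambda)=0$ and $M\in C^{2}$ makes $\tau\mapsto g(\tau,\lambda)$ of class $C^{2}$ (with $g',g''$ expressed through $M,M',M''$ and $\rho,\rho'=-\lambda\rho-g$), two integrations by parts give
\[
   \int_0^t e^{-\lambda(t-\tau)}g(\tau,\lambda)\,d\tau=\frac{g(t,\lambda)}{\lambda}-\frac{g'(t,\lambda)}{\lambda^{2}}+\frac{1}{\lambda^{2}}\int_0^t e^{-\lambda(t-\tau)}g''(\tau,\lambda)\,d\tau .
\]
A short bootstrap — inserting $\rho=e^{-\lambda\cdot}+O(\lambda^{-1})$, then $\rho-e^{-\lambda\cdot}=O(\lambda^{-2})$, back into $g$, together with the elementary expansion $\int_0^t M(t-s)e^{-\lambda s}\,ds=M(t)\lambda^{-1}+O(\lambda^{-2})$ (one more integration by parts, whose $\tau=0$ boundary term is proportional to $e^{-\lambda t}$) — pins down $g(t,\lambda)=M(t)\lambda^{-1}+O(\lambda^{-2})$, hence $\rho(t,\lambda)=-M(t)\lambda^{-2}+(\text{remainder})$, all $O(\cdot)$ being uniform on $[0,T]$.

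It remains to show the remainder has the claimed shape. Collecting terms, it is a finite combination of quantities that are either exponentially small in $\lambda$ — such as $e^{-\lambda t}$ and $t\,e^{-\lambda t}\lambda^{-1}$ — or genuinely $O(\lambda^{-3})$, built from $M,M',M''$ and $\rho$ integrated over $[0,t]$; in each case, multiplying and dividing by $t^{3}$ and using $\lambda\ge\lambda_1$ together with $\sup_{x\ge0}x^{N}e^{-x}<\infty$ shows the sum equals $r(t,\lambda)(t\lambda)^{-3}$ with $\sup_{\lambda\ge\lambda_1,\,0\le t\le T}|r(t,\lambda)|<\infty$ for every $T>0$; continuity of $r(\cdot,\lambda)$ on $(0,\infty)$ comes from the smooth $t$-dependence of $\rho$, and then continuity of $t\mapsto R(t,A)$ in $\mathcal L(\mathcal H^{\hat s})$ follows since the multiplier coefficients are uniformly bounded in $k$. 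Finally \eqref{20240229-ContinuityWithImprovedFourOrders} is read off \eqref{20240229-DecompositionOfSolutions}: on $[\delta,1/\delta]$ one has $\|M(t)A^{-2}y_0\|_{\mathcal H^{s+4}}=|M(t)|\,\|y_0\|_{\mathcal H^{s}}$ and $\|R(t,A)(tA)^{-3}y_0\|_{\mathcal H^{s+4}}\le\delta^{-3}\,\|R(t,A)\|_{\mathcal L(\mathcal H^{s+4})}\,\lambda_1^{-1}\,\|y_0\|_{\mathcal H^{s}}$, with $\|R(t,A)\|_{\mathcal L(\mathcal H^{s+4})}$ bounded on the compact interval $[\delta,1/\delta]$.

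The main obstacle is the bookkeeping in the two integrations by parts and the bootstrap. One has to notice that $g''(\tau,\lambda)$ contains the term $-M(0)\,\lambda\,\rho(\tau,\lambda)$, which is only $O(\lambda)$ and would naively destroy the $\lambda^{-2}$ accuracy; the resolution is that, after division by $\lambda^{2}$ and integration against $e^{-\lambda(t-\tau)}$, using $\rho=e^{-\lambda\cdot}+O(\lambda^{-1})$ this contributes only $-M(0)\,t\,e^{-\lambda t}\lambda^{-1}+O(\lambda^{-3})$, which is harmless. Equally delicate is verifying that no term produces a time singularity worse than $t^{-3}$ as $t\downarrow0$, so that the constant in the lemma is uniform down to $t=0$ — this is precisely what the factor $(tA)^{-3}$, rather than $A^{-3}$, and the $L^\infty_{loc}([0,\infty))$ statement for $R$ encode — and that the $C^{2}$ regularity of $M$ genuinely suffices throughout.
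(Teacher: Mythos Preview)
The paper does not actually prove this lemma: it is quoted from \cite[Corollary~2.2]{WZZ-arXiv-2023}, with only the remark that the proof there (and in \cite{WZZ-JMPA-2022}) goes through under the $C^2$ hypothesis on $M$. Judging from \eqref{20240318-yubiao-IntegralExpressionOfODEWithMemory}--\eqref{K_M(t,s)}, the cited works proceed via the explicit Dyson-type representation
\[
   x_k(t)=e^{-\lambda_k t}+\int_0^t K_M(t,s)\,e^{-\lambda_k s}\,ds,\qquad K_M(t,s)=\sum_{j\ge1}\frac{s^j}{j!}(-M)^{*j}(t-s),
\]
and extract the asymptotics by integrating $e^{-\lambda s}$ by parts in $s$ against the $\lambda$-\emph{independent} kernel $K_M$; since $K_M(t,0)=0$ and $\partial_sK_M(t,0)=-M(t)$, two integrations by parts produce the $-M(t)\lambda^{-2}$ term, and a third gives the $(t\lambda)^{-3}$ remainder with bounds uniform in $\lambda$ coming directly from bounds on $\partial_s^jK_M$. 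Your route --- Duhamel relative to $e^{tA}$, then integration by parts and a bootstrap in $\lambda$ --- is a reasonable alternative, and it does make the use of $M\in C^2$ explicit; but because your integrand $g(\tau,\lambda)$ depends on $\lambda$, the bookkeeping is intrinsically heavier than in the $K_M$ approach.

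Two points need fixing. First, $g'(0,\lambda)=M(0)\rho(0,\lambda)=M(0)$, not $0$; the second integration by parts therefore produces the extra boundary term $M(0)e^{-\lambda t}\lambda^{-2}$. This is harmless (it is absorbed into the exponentially small pile), but your displayed identity is off by this term. Second, and more substantively, your argument for $R\in C((0,\infty);\mathcal L(\mathcal H^{\hat s}))$ is a non sequitur: pointwise continuity of $t\mapsto r(t,\lambda_k)$ for each $k$, together with uniform boundedness in $k$, does \emph{not} give continuity in operator norm. What is needed is $\sup_k|r(t,\lambda_k)-r(t_0,\lambda_k)|\to0$, i.e.\ equicontinuity in $k$, and this is not free --- a naive attempt to bound $\partial_t r(t,\lambda)$ uniformly in $\lambda$ fails, since cancellations between the $\rho$- and $M$-contributions are essential. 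You will need to track those cancellations explicitly (or, as in the cited papers, work with the $\lambda$-independent kernel $K_M$, where the issue does not arise).
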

\begin{remark}%\label{remark-lemma2.1}
Lemma \ref{20240226-proposition-DecompsotionOfSolutions} is \cite[Corollary 2.2]{WZZ-arXiv-2023} where $M$ was assumed to be analytic. However,
after carefully checking its proof, as well as the proof of \cite[Theorem 1.1]{WZZ-JMPA-2022}, we can see that it still holds for the case
when $M$ only satisfies the assumption $(\mathcal{A})$.
\end{remark}

The main result in this section is presented as follows. 
\begin{proposition}\label{20240228-proposition-RelaxedObservability}
   Suppose that  $M$ satisfies the assumption $(\mathcal{A})$. Let $m\in \mathbb{N}^+$.  Let $\{ t_j \}_{j=1}^m \subset (0,+\infty)$ and let $\omega_1,\omega_2,\ldots,\omega_m $ be  nonempty open subsets of $\overline{\Omega}$. Then, the following two statements are equivalent:
\begin{itemize}
    \item[(i)] The sequence $\{\omega_j \}_{j=1}^m$ satisfies the weak geometric observation condition for the equation \eqref{20240215-OriginalEquation} at $\{t_j \}_{j=1}^m$ (i.e., \eqref{20240215-NecessaryGeometricObservationCondition} holds);

    \item[(ii)] There is
     $C>0$ such that
    \begin{align}\label{20240215-RelaxedSamplingObservability}
    C \| y_0 \|_{\mathcal H^{-4}}
    \leq  \sum_{j=1}^m  \| \chi_{\omega_j} y(t_j;y_0) \|_{L^2(\Omega)}
        + \| y_0 \|_{\mathcal H^{-6}}\;\;\mbox{for each}\;\;y_0 \in \mathcal H^{-4}.
\end{align}
\end{itemize}
\end{proposition}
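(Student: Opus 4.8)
The plan is to exploit the decomposition \eqref{20240229-DecompositionOfSolutions} to isolate the leading-order behaviour of $y(t_j;y_0)$. Writing $\Phi(t_j) = -M(t_j)A^{-2} + R(t_j,A)(t_jA)^{-3}$, for each $j$ the term $R(t_j,A)(t_jA)^{-3}$ maps $\mathcal H^{-4}$ boundedly into $\mathcal H^{-4+6}=\mathcal H^{2}$ — more precisely, by \eqref{2.1Wang} it sends $\mathcal H^{\hat s}$ into $\mathcal H^{\hat s+6}$ — so that $\|R(t_j,A)(t_jA)^{-3}y_0\|_{L^2(\Omega)} \le C\|y_0\|_{\mathcal H^{-6}}$. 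Hence, up to the admissible remainder $\|y_0\|_{\mathcal H^{-6}}$, proving \eqref{20240215-RelaxedSamplingObservability} reduces to proving the ``frozen'' inequality
\begin{align*}
    C\|y_0\|_{\mathcal H^{-4}} \le \sum_{j=1}^m |M(t_j)|\,\|\chi_{\omega_j} A^{-2}y_0\|_{L^2(\Omega)} + \|y_0\|_{\mathcal H^{-6}}
    \quad\text{for all } y_0\in\mathcal H^{-4}.
\end{align*}
Setting $z := A^{-2}y_0 \in \mathcal H^{0}=L^2(\Omega)$, note $\|y_0\|_{\mathcal H^{-4}} = \|A^{-2}y_0\|_{L^2(\Omega)} = \|z\|_{L^2(\Omega)}$ and $\|y_0\|_{\mathcal H^{-6}} = \|A^{-2}y_0\|_{\mathcal H^{-2}} = \|z\|_{\mathcal H^{-2}}$, so the target becomes the interpolation-type inequality
\begin{align*}
    C\|z\|_{L^2(\Omega)} \le \sum_{j=1}^m |M(t_j)|\,\|\chi_{\omega_j} z\|_{L^2(\Omega)} + \|z\|_{\mathcal H^{-2}}
    \quad\text{for all } z\in L^2(\Omega).
\end{align*}

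The proof of the implication $(i)\Rightarrow(ii)$ then rests on this last inequality, which I would establish by a compactness–uniqueness argument. Suppose it fails; then there is a sequence $(z_k)\subset L^2(\Omega)$ with $\|z_k\|_{L^2(\Omega)}=1$ but $\sum_j|M(t_j)|\|\chi_{\omega_j}z_k\|_{L^2(\Omega)} + \|z_k\|_{\mathcal H^{-2}} \to 0$. Passing to a subsequence, $z_k \rightharpoonup z$ weakly in $L^2(\Omega)$; since $\mathcal H^0 \hookrightarrow \mathcal H^{-2}$ is compact (the embedding $L^2(\Omega)\hookrightarrow\mathcal H^{-2}$ is compact because $\lambda_k\to\infty$), we get $z_k\to z$ strongly in $\mathcal H^{-2}$, hence $\|z\|_{\mathcal H^{-2}}=0$, i.e. $z=0$. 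But then $\|z_k\|_{\mathcal H^{-2}}\to 0$ forces, via the inequality itself applied along a suitable Gårding/interpolation estimate, the conclusion $\|z_k\|_{L^2(\Omega)}\to 0$, contradicting $\|z_k\|_{L^2(\Omega)}=1$; here one uses that $\chi_{\omega_j}z_k$ terms are negligible and that $\|z\|_{L^2}\lesssim \sum_j|M(t_j)|\|\chi_{\omega_j}z\|_{L^2}$ for the limit $z$ — but this needs the weak geometric observation condition. Concretely, the weak geometric observation condition \eqref{20240215-NecessaryGeometricObservationCondition} guarantees that the function $x\mapsto\sum_j|M(t_j)|\chi_{\omega_j}(x)$ is positive a.e.\ on $\overline\Omega$, hence bounded below by a positive constant on a set of full measure, which immediately yields $\|z\|_{L^2(\Omega)}^2 \le C\sum_j|M(t_j)|^2\|\chi_{\omega_j}z\|_{L^2(\Omega)}^2$ for every $z\in L^2(\Omega)$, with no compactness needed. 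So in fact the compactness argument can be dispensed with: \eqref{20240215-NecessaryGeometricObservationCondition} directly gives $\|z\|_{L^2(\Omega)}\le C\sum_j|M(t_j)|\|\chi_{\omega_j}z\|_{L^2(\Omega)}$, and combining with the remainder estimate above proves $(ii)$.

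For the converse $(ii)\Rightarrow(i)$, I would argue by contraposition: if \eqref{20240215-NecessaryGeometricObservationCondition} fails, then the set $E:=\{x\in\overline\Omega : \sum_j|M(t_j)|\chi_{\omega_j}(x)=0\}$ has positive measure, and for every $j$ with $M(t_j)\neq 0$ we have $\omega_j\cap E$ of measure zero, so $\chi_{\omega_j}\chi_E = 0$ in $L^2$ for those $j$. I would then construct, for each $N$, an initial datum $y_0^N$ supported (in frequency) on high modes in a way that concentrates $A^{-2}y_0^N$ on $E$: take $y_0^N$ a finite linear combination of eigenfunctions $e_k$ with large $k$, chosen so that $z^N := A^{-2}y_0^N$ approximates a nonzero $L^2$ function supported on $E$ while keeping $\|y_0^N\|_{\mathcal H^{-4}}=\|z^N\|_{L^2}=1$ and $\|y_0^N\|_{\mathcal H^{-6}}=\|z^N\|_{\mathcal H^{-2}}\to 0$ (possible precisely because higher modes have smaller $\mathcal H^{-2}$ norm relative to $L^2$ norm). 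Using the decomposition again, $\sum_j\|\chi_{\omega_j}y(t_j;y_0^N)\|_{L^2(\Omega)} \le \sum_j|M(t_j)|\|\chi_{\omega_j}z^N\|_{L^2(\Omega)} + C\|y_0^N\|_{\mathcal H^{-6}}$, and the first sum tends to $0$ because $z^N$ concentrates on $E$ (where the $\chi_{\omega_j}$ with $M(t_j)\neq0$ vanish) while the second tends to $0$ by construction; this contradicts \eqref{20240215-RelaxedSamplingObservability}.

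I expect the main obstacle to be the careful construction of the concentrating sequence in the converse direction: one must simultaneously have $z^N$ essentially supported on $E$, normalized in $L^2$, negligible in $\mathcal H^{-2}$, and lying in the range of $A^{-2}$ applied to $\mathcal H^{-4}$ (equivalently $z^N\in L^2$), and these constraints interact. A clean way around it is to fix a single nonzero $w\in L^2(\Omega)$ supported on $E$, write $w=\sum_k w_k e_k$, and set $z^N := (\sum_{k\ge N} w_k^2)^{-1/2}\sum_{k\ge N} w_k e_k$; then $\|z^N\|_{L^2}=1$ and $\|z^N\|_{\mathcal H^{-2}}^2 = (\sum_{k\ge N}w_k^2)^{-1}\sum_{k\ge N}w_k^2\lambda_k^{-2} \le \lambda_N^{-2}\to 0$, while $\|\chi_{\omega_j}z^N\|_{L^2}\to\|\chi_{\omega_j}w/\|w\|\|_{L^2}$-type limits need a small extra argument since $z^N\rightharpoonup 0$; one instead estimates $\|\chi_{\omega_j}z^N\|_{L^2} = \|\chi_{\omega_j}z^N\|_{L^2}$ and uses that $\chi_{\omega_j}$ commutes badly with the projection, so it is cleanest to approximate $w$ by a function whose finite-mode truncation is still supported near $E$, or simply to note the weak limit is $0$ and argue the whole sum is eventually small by a direct $\varepsilon$-argument splitting $w = w\chi_{\overline\Omega\setminus\omega_j} $ on the relevant $\omega_j$. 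Modulo this bookkeeping, both implications follow from the decomposition \eqref{20240229-DecompositionOfSolutions} together with the compactness of $L^2(\Omega)\hookrightarrow\mathcal H^{-2}$.
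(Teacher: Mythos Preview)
Your direction $(i)\Rightarrow(ii)$ is essentially the paper's argument. One point you glide over: the implication ``$\sum_j|M(t_j)|\chi_{\omega_j}(x)>0$ a.e.\ $\Rightarrow$ bounded below by a positive constant a.e.'' is not automatic for a general nonnegative function; it holds here because $\sum_j|M(t_j)|\chi_{\omega_j}$ is a step function taking only finitely many values. The paper states this explicitly; you should too.

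For $(ii)\Rightarrow(i)$ your strategy (contraposition via a sequence concentrating on $E$) is different from the paper's, and as written it has a real gap. With your construction $z^N = c_N P_{\ge N}w$, where $c_N=\|P_{\ge N}w\|_{L^2}^{-1}$ and $w\in L^2(\Omega)$ is supported on $E$, you need $\|\chi_{\omega_j}z^N\|_{L^2}\to 0$ for each $j$ with $M(t_j)\neq 0$. Using $\chi_{\omega_j}w=0$ you get $\chi_{\omega_j}z^N=-c_N\chi_{\omega_j}P_{<N}w$, so $\|\chi_{\omega_j}z^N\|_{L^2}=c_N\|\chi_{\omega_j}P_{<N}w\|_{L^2}$. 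Here $c_N\to\infty$ while $\|\chi_{\omega_j}P_{<N}w\|_{L^2}\to 0$, and you have no control on the product; spectral truncation destroys spatial support, and neither the weak convergence $z^N\rightharpoonup 0$ nor your proposed ``$\varepsilon$-splitting'' recovers strong $L^2$ smallness on $\omega_j$. The fix is easy but different from what you wrote: take instead any $L^2$-orthonormal sequence $(z^N)$ in $L^2(E)$ (infinite-dimensional since $|E|>0$); then $z^N\rightharpoonup 0$ in $L^2$, hence $\|z^N\|_{\mathcal H^{-2}}\to 0$ by compactness, $\|z^N\|_{L^2}=1$, and $\chi_{\omega_j}z^N=0$ exactly for the relevant $j$.

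The paper avoids this altogether: for each $x_0\in\Omega$ it tests \eqref{20240215-RelaxedSamplingObservability} against the sequence $y_{0,k}$ defined by $A^{-2}y_{0,k}=|B(x_0,1/k)|^{-1/2}\chi_{B(x_0,1/k)\cap\Omega}$, checks $\|y_{0,k}\|_{\mathcal H^{-4}}\to 1$ and $\|y_{0,k}\|_{\mathcal H^{-6}}\to 0$, and then reads off via the decomposition and the Lebesgue differentiation theorem that $\sum_j|M(t_j)|\chi_{\omega_j}(x_0)\ge C/\sqrt{m}$ for a.e.\ $x_0$. This gives the pointwise lower bound directly, with no contraposition and no delicate sequence construction.
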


\begin{remark}%\label{remark-relax}
$(i)$ We call \eqref{20240215-RelaxedSamplingObservability} the relaxed observability inequality, due to the extra term
$ \| y_0 \|_{\mathcal H^{-6}}$ on the right hand side of \eqref{20240215-RelaxedSamplingObservability}. This inequality plays an important role in the proof of our main
theorems.

$(ii)$ Proposition \ref{20240228-proposition-RelaxedObservability} shows that the weak geometric observation condition is a necessary and sufficient condition on the relaxed observability inequality. While  it follows from Theorem \ref{20240215-FirstMainTheorem-SamplingObservability}
 that  the weak geometric observation condition is a necessary  condition on   the two-sided sampling observability inequality \eqref{20240226-SamplingObservability}. 
We don't know how to go from Proposition \ref{20240228-proposition-RelaxedObservability} to Theorem \ref{20240215-FirstMainTheorem-SamplingObservability} under the weak  geometric observation condition.
The main difficulty is that we do not know how to use \eqref{20240215-NecessaryGeometricObservationCondition} (instead of \eqref{20240215-GeometricObservationCondition}) to 
obtain the unique continuation property (introduced in Proposition \ref{20240228-proposition-UniqueContinuation}). While the latter plays an important role in the proof of Theorem  \ref{20240215-FirstMainTheorem-SamplingObservability}.

%when only this condition is assumed. The main barrier is how to prove a unique continuation property (introduced in Proposition \ref{20240228-proposition-UniqueContinuation}) under \eqref{20240215-NecessaryGeometricObservationCondition} instead of \eqref{20240215-GeometricObservationCondition} (that is very important in Proposition \ref{20240228-proposition-UniqueContinuation}). 
\end{remark}

\begin{proof}[Proof of Proposition \ref{20240228-proposition-RelaxedObservability}]
    We first show $(i)\Rightarrow(ii)$. Assume that $(i)$ holds (i.e., \eqref{20240215-NecessaryGeometricObservationCondition} is true).
    To show $(ii)$, we arbitrarily fix   $y_0\in \mathcal{H}^{-4}$. By \eqref{20240229-DecompositionOfSolutions} and \eqref{2.1WANG}, we see that
    \begin{align}\label{20240327-yubiao-MainPartOfSolutions}
        M(t) A^{-2} y_0 = - y(t;y_0) + R(t,A) (tA)^{-3} y_0,\;\;t>0.
    \end{align}
Since $R(\cdot,A)\in C((0,+\infty);\mathcal{L}(L^2(\Omega)))$ (see Lemma \ref{20240226-proposition-DecompsotionOfSolutions}),
we find from \eqref{20240327-yubiao-MainPartOfSolutions} that
\begin{align*}
   \sum_{j=1}^m \| \chi_{\omega_j} M(t_j) A^{-2} y_0 \|_{L^2(\Omega)}
    \leq \sum_{j=1}^m \| \chi_{\omega_j} y(t_j;y_0) \|_{L^2(\Omega)}
     +  C_1 \sum_{j=1}^m t_j^{-3} \| A^{-3} y_0\|_{L^2(\Omega)},
\end{align*}
where $C_1$ is a positive constant independent of $y_0$.
This yields
\begin{align}\label{2.5wang}
     \bigg\| \bigg( \sum_{j=1}^m \chi_{\omega_j} |M(t_j)| \bigg)  A^{-2} y_0  \bigg\|_{L^2(\Omega)}
    \leq \sum_{j=1}^m \| \chi_{\omega_j} y(t_j;y_0) \|_{L^2(\Omega)}
        +  C_1 \sum_{j=1}^m t_j^{-3} \| A^{-3} y_0\|_{L^2(\Omega)}.
\end{align}
Meanwhile, since for each $j\in\{1,\dots,m\}$, $x\rightarrow \sum_{j=1}^m \chi_{\omega_j}(x) |M(t_j)|$ is a step function over $\Omega$, which takes a finite number of values, we see from  \eqref{20240215-NecessaryGeometricObservationCondition}  that
\begin{align}\label{2.6wang}
    \sum_{j=1}^m \chi_{\omega_j}(x) |M(t_j)|   \geq C_2 >0
    ~\text{for a.e.}~  x \in \Omega,
\end{align}
where $C_2$ is a positive constant independent of $x$.

Since  $y_0$ was arbitrarily taken from $L^2(\Omega)$, \eqref{20240215-RelaxedSamplingObservability}
follows from \eqref{2.5wang} and \eqref{2.6wang}. This proves $(ii)$.

Next, we verify $(ii)\Rightarrow(i)$. Suppose that $(ii)$ is true (i.e.,  \eqref{20240215-RelaxedSamplingObservability} holds).
To show $(i)$, we arbitrarily fix $x_0\in \Omega$. Then for each $k\in \mathbb{N}^+$, we define $y_{0,k}\in \mathcal{H}^{-4}$ in the following manner:
\begin{align}\label{20240319-yubiao-ConstructOfTestingInitialData}
    A^{-2} y_{0,k} := |B(x_0,1/k)|^{ -\frac{1}{2} }
    \cdot \chi_{ B(x_0,1/k) \cap \Omega }.
    %\sqrt{k} \rho\big( k(\cdot-x_0) \big)|_{\Omega},~\forall\, k \in \mathbb N^+.
\end{align}
Here, $B(x_0,1/k)$ denotes the open ball in $\mathbb R^n$, centered at $x_0$ and of radius $1/k$.
Then it follows from  \eqref{20240319-yubiao-ConstructOfTestingInitialData} that
\begin{align}\label{20240317-yubiao-PropertiesOfConstructedInitialData}
    \lim_{k\rightarrow +\infty} \|y_{0,k} \|_{ \mathcal H^{-4} } 
    = \lim_{k\rightarrow +\infty}   \|A^{-2} y_{0,k} \|_{ L^2(\Omega) }
    = 1.
    \end{align}
We now claim
\begin{align}\label{2.9wang}
\lim_{k\rightarrow +\infty}  y_{0,k} =0 ~\text{strongly in}~  \mathcal H^{-6}.
\end{align}
In fact, we notice that \eqref{2.9wang} is equivalent to that 
\begin{align}\label{2.10wang}
\lim_{k\rightarrow +\infty}  A^{-1}(A^{-2}y_{0,k}) =0 ~\text{strongly in}~  L^2(\Omega).
\end{align}
Since $A^{-1}$ is compact on $L^2(\Omega)$, in order to show \eqref{2.10wang}, it suffices to prove that
\begin{align*}
A^{-2}y_{0,k}\rightharpoonup 0 ~\text{weakly in}~  L^2(\Omega),\;\;\mbox{as}\;\;k\rightarrow +\infty.
\end{align*}
That is (see \eqref{20240319-yubiao-ConstructOfTestingInitialData}),
\begin{align}\label{2.12wang}
|B(x_0,1/k)|^{ -\frac{1}{2} }
    \cdot \chi_{ B(x_0,1/k) \cap \Omega }\rightharpoonup 0 ~\text{weakly in}~  L^2(\Omega),\;\;\mbox{as}\;\;k\rightarrow +\infty.
\end{align}
We now verify \eqref{2.12wang}. Without loss of generality, we can assume $x_0=0\in\Omega$. (Otherwise, we can use the translation of the coordinate system.) Let $f\in L^2(\Omega)$. Then,  when $k$ is sufficiently larger,  
\begin{align*}
   \big\langle 
        |B(0, 1/k)|^{-\frac{1}{2}} \chi_{B(0, 1/k)}, f
    \big\rangle_{L^2(\Omega)} 
  =& \int_{B(0,1/k)}  |B(0,1/k)|^{-\frac{1}{2}}f(y)dy
  \nonumber\\
  = & \int_{B(0,1)}  \big( |B(0,1)|^{-\frac{1}{2}} 
        k^{\frac{n}{2}}
    \big)
    f(z/k)  k^{-n}  dz
        \nonumber\\
  = & k^{-\frac{n}{2}}  |B(0,1)|^{-\frac{1}{2}} 
  \int_{B(0,1)}     f(z/k) dz
\end{align*}
which tends to $0$ as $k$ goes to $\infty$.
Then by \eqref{2.1WANG}, \eqref{20240229-DecompositionOfSolutions} (in Lemma \ref{20240226-proposition-DecompsotionOfSolutions}) and \eqref{2.9wang}, we have
\begin{align*}
    \lim_{k\rightarrow +\infty}  \| \chi_{\omega_j} y(t_j;y_{0,k}) \|_{L^2(\Omega)}
    =  \lim_{k\rightarrow +\infty} \Big(
            |M(t_j)|  \cdot \| \chi_{\omega_j} A^{-2} y_{0,k} \|_{L^2(\Omega)}
        \Big) \ \text{ for each}~ j=1,\ldots,m.
\end{align*}
This, along with \eqref{20240317-yubiao-PropertiesOfConstructedInitialData}, \eqref{20240215-RelaxedSamplingObservability} and \eqref{2.9wang}, yields
\begin{align}\label{2.13wang}
    C =&  C \lim_{k\rightarrow +\infty} \| y_{0,k} \|_{\mathcal H^{-4}}
    \leq   \lim_{k\rightarrow +\infty}
            \sum_{j=1}^m \| \chi_{\omega_j} y(t_j; y_{0,k}) \|_{L^2(\Omega)}
        \nonumber\\
        =& \lim_{k\rightarrow +\infty}
            \sum_{j=1}^m \Big( |M(t_j)| \cdot \| \chi_{\omega_j} A^{-2} y_{0,k} \|_{L^2(\Omega)}
        \Big),
    % \nonumber\\
    % \leq&  \lim_{k\rightarrow +\infty}
    %         \sum_{j=1}^m  \Big( |M(t_j)| \cdot |B(x_0,1/k)|^{-\frac{1}{2} }
    %         \cdot \| \chi_{\omega_j} \chi_{ B(x_0,1/k) } \|_{L^2(\Omega)}
    %     \Big)
\end{align}
where $C$ is given by \eqref{20240215-RelaxedSamplingObservability}.

At the same time, we note that for any sequence $(a_j)_{j=1}^m \subset [0,+\infty)$,
\begin{align}\label{2.14wang}
    a_1 + \cdots + a_m
    \leq   \sqrt{ m (a_1^2 + \cdots + a_m^2) }.
\end{align}
Now, it follows from \eqref{2.13wang}, along with  \eqref{20240319-yubiao-ConstructOfTestingInitialData} and \eqref{2.14wang} (with $a_j=|M(t_j)| \cdot \| \chi_{\omega_j} A^{-2} y_{0,k} \|_{L^2(\Omega)}$) that
\begin{align*}
    C^2/m  \leq \limsup_{k\rightarrow +\infty}
    {\int\hspace{-1.05em}-}_{ B(x_0,1/k) }
     \bigg( \sum_{j=1}^m \chi_{\omega_j}(x) |M(t_j)| \bigg)^2
    dx.
    % \bigg[
    % \frac{1}{ |B(x_0,1/k)| }
    % \int_{B(x_0,1/k)} \bigg( \sum_{j=1}^m \chi_{\omega_j}(x) |M(t_j)| \bigg)^2
    % dx
    % \bigg].
\end{align*}
Since $x_0$ was arbitrarily taken from $\Omega$, we see
\begin{align*}
    C/\sqrt{m}  \leq \sum_{j=1}^m \chi_{\omega_j}(x_0) |M(t_j)|
    ~\text{for a.e.}~ x_0\in \Omega,
\end{align*}
which leads to \eqref{20240215-NecessaryGeometricObservationCondition}. This shows (i).

Hence, we finish the proof of Proposition \ref{20240228-proposition-RelaxedObservability}.
\end{proof}

 \section{Unique continuation at a finite number of time instants}\label{UniqueContinuation}

This section aims to present the following unique continuation property for the equation \eqref{20240215-OriginalEquation}:

\begin{proposition}\label{20240228-proposition-UniqueContinuation}
    Let  $M$ satisfy the assumption $(\mathcal{A})$. Let $m\in \mathbb{N}^+$.  Let $\{ t_j \}_{j=1}^m \subset (0,+\infty)$ and let $\omega_1,\omega_2,\ldots,\omega_m $ be  nonempty open subsets of $\overline{\Omega}$.
    Suppose that the    equation \eqref{20240215-OriginalEquation} has the backward uniqueness at $\{ t_j \}_{j=1}^m$ (i.e., \eqref{20240215-BackwardUniqueness} holds). Assume
    that $\{ \omega_j \}_{j=1}^m$ satisfies the geometric observation condition for the    equation \eqref{20240215-OriginalEquation}
    at $\{ t_j \}_{j=1}^m$ (i.e., \eqref{20240215-GeometricObservationCondition} holds).
     Then, when $y_0 \in \mathcal H^{-4}$,
\begin{align}\label{3.1wang}
    y(t_j;y_0) =0 ~~\text{over}~~  \omega_j
    ~~\text{for each}~~ j=1,\ldots,m
    ~~\Rightarrow~~
    y_0 = 0.
\end{align}
\end{proposition}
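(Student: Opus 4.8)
The plan is to exploit the decomposition $\Phi(t) = -M(t)A^{-2} + R(t,A)(tA)^{-3}$ from Lemma \ref{20240226-proposition-DecompsotionOfSolutions}, which isolates the ``dominant'' part $-M(t)A^{-2}$ of the solution, together with the elliptic unique continuation property for $-\Delta$. Suppose $y_0 \in \mathcal H^{-4}$ satisfies $y(t_j;y_0)=0$ on $\omega_j$ for every $j$. The key first observation is that $z := A^{-2}y_0 \in L^2(\Omega)$ solves $-\Delta(A^{-2}z')$-type relations; more precisely, since $A^{-2}y_0 \in \mathcal H^{-4+4}=L^2(\Omega)$ is in the domain scale, I would like to show $z$ is actually smooth enough that elliptic unique continuation applies. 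The natural move is: from \eqref{20240327-yubiao-MainPartOfSolutions}, on each $\omega_j$ we get
\begin{align*}
    M(t_j)\, A^{-2}y_0 = R(t_j,A)(t_jA)^{-3}y_0 \quad \text{on } \omega_j .
\end{align*}
The left side is a fixed function (up to the scalar $M(t_j)$) while the right side is \emph{two derivatives smoother} than $A^{-2}y_0$ (it lives in $\mathcal H^{-4+6}=\mathcal H^{2}$ at worst, by \eqref{2.1Wang} with $\hat s=-4$ and the extra $A^{-3}$ versus $A^{-2}$).

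So here is the bootstrap I would run. Let $w := A^{-2}y_0 \in L^2(\Omega)$, i.e. $-\Delta^2$-preimage... actually $w = A^{-2}y_0$ means $A^2 w = y_0$, and since $y_0\in\mathcal H^{-4}$ we have $w\in \mathcal H^0 = L^2$. On the set $J := \{j : M(t_j)\neq 0\}$ we have, for $j\in J$,
\begin{align*}
    w = \frac{1}{M(t_j)} R(t_j,A)(t_jA)^{-3}y_0 =: w_j \quad\text{on }\omega_j,
\end{align*}
and each $w_j \in \mathcal H^{-4+6} = \mathcal H^2 \subset H^2(\Omega)$ by \eqref{2.1Wang}. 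The geometric observation condition \eqref{20240215-GeometricObservationCondition} says exactly that $\{\omega_j : j\in J\}$ covers $\overline\Omega$. Therefore $w$ coincides \emph{locally} with an $H^2$ function near every point of $\overline\Omega$, hence $w\in H^2_{loc}$ and in fact $w\in H^2(\Omega)$ after patching (using that finitely many $\omega_j$ cover the compact set $\overline\Omega$ and a partition-of-unity / overlap argument; one should be slightly careful about matching on overlaps, but on $\omega_i\cap\omega_j$ both $w_i$ and $w_j$ equal $w$, so they agree). Then $y_0 = A^2 w$ where now $w\in H^2$, so $y_0 \in \mathcal H^{-2}$. Feeding $y_0\in\mathcal H^{-2}$ back into the same identities gives $w_j \in \mathcal H^{-2+6}=\mathcal H^4$, hence $w\in\mathcal H^4$, hence $y_0\in\mathcal H^0=L^2$. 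One more turn of the crank (now $w_j\in\mathcal H^6$) gives $y_0\in\mathcal H^2$, and iterating, $y_0\in\mathcal H^s$ for every $s$, so $y_0\in C^\infty$; similarly the solution $y(t_j;\cdot)$ and the kernel-regularity propagate so that $w=A^{-2}y_0$ is smooth. I expect one needs the continuity $R(\cdot,A)\in C((0,\infty);\mathcal L(\mathcal H^{\hat s}))$ for all $\hat s$, which is granted.

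Once $w = A^{-2}y_0 \in C^\infty(\overline\Omega)$ (or at least $H^2_{loc}$ with enough regularity), I would invoke the unique continuation principle for the bi-Laplacian-type / Laplacian operator: for each $j$ with $M(t_j)\neq 0$, on $\omega_j$ we have $y(t_j;y_0)=0$, and via \eqref{20240327-yubiao-MainPartOfSolutions} this pins down $w$ on $\omega_j$ up to the smooth remainder; but the cleaner route is to observe that after the bootstrap $y_0\in C^\infty$, and then $y(t_j;y_0)$ is a genuine (smooth) solution vanishing on the open set $\omega_j$ — one then wants a unique continuation / analyticity-in-space type statement. Actually the most robust formulation: having shown $w=A^{-2}y_0$ is smooth and equals the smooth function $w_j$ on $\omega_j$, and since $\sum_{j\in J}\chi_{\omega_j}>0$ on all of $\overline\Omega$, we directly conclude $w$ is smooth everywhere; then $A^2 w = y_0$ and we use that $y_0\mapsto y(t_j;y_0)$ with the decomposition lets us compare, and finally apply the \emph{backward uniqueness at $\{t_j\}$} hypothesis. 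That is the punchline: once regularity is established and we know $y(t_j;y_0)$ vanishes on $\omega_j$, we need to upgrade ``vanishes on $\omega_j$'' to ``vanishes on $\Omega$'' to then apply backward uniqueness \eqref{20240215-BackwardUniqueness} and deduce $y_0=0$.

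The real obstacle, and where I would spend the most care, is precisely that upgrade from \emph{local} vanishing on $\omega_j$ to \emph{global} vanishing, i.e. genuine unique continuation in the spatial variable for the memory equation's time-slices. The memory term couples $y(t_j)$ to the whole history $\{y(s)\}_{s<t_j}$, so $y(t_j;\cdot)$ is not simply an eigenfunction expansion with a clean elliptic UCP. The decomposition \eqref{20240229-DecompositionOfSolutions} is the tool that rescues this: modulo the $(t_jA)^{-3}$-smoothing remainder, $y(t_j;y_0) = -M(t_j)A^{-2}y_0$, and $A^{-2}y_0$ does satisfy an elliptic equation ($A^2(A^{-2}y_0)=y_0$), so spatial UCP for powers of $\Delta$ (or iterated application of the second-order UCP of Aronszajn/Hörmander type) applies to $A^{-2}y_0$ once we know $A^{-2}y_0=0$ on some open set. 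To \emph{get} $A^{-2}y_0=0$ on an open set, I would argue by a dimension/degeneration argument or by the bootstrap contradiction: if $y_0\neq0$, pick the largest open set on which $w=A^{-2}y_0$ vanishes; the geometric condition forces some $\omega_j$ (with $M(t_j)\neq0$) to meet its complement, and on that piece the identity $M(t_j)w = $ (smooth remainder) combined with $y(t_j)=0$ produces a contradiction with unique continuation. Sorting out this argument carefully — in particular making sure the remainder term does not obstruct the UCP, presumably by first running the full smoothing bootstrap so everything is $C^\infty$ and the remainder is a \emph{known} smooth quantity — is the crux of the proof; the rest (partition of unity, density, invoking \eqref{20240215-BackwardUniqueness}) is routine.
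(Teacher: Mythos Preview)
Your regularity bootstrap is correct and indeed appears in the paper (it is one step of what you describe, used to show $A^{-2}y_0\in\mathcal H^2$ whenever $y_0$ lies in the kernel $V$). The genuine gap is exactly the ``crux'' you flag but do not resolve: there is no mechanism in your argument to pass from $y(t_j;y_0)=0$ on $\omega_j$ to $y(t_j;y_0)=0$ on all of $\Omega$. On $\omega_j$ the identity $M(t_j)A^{-2}y_0 = R(t_j,A)(t_jA)^{-3}y_0$ only says that $w=A^{-2}y_0$ coincides with a smoother function there; it never forces $w$ to vanish on any open set, because the remainder $R(t_j,A)(t_jA)^{-3}y_0$ depends on the unknown $y_0$ and has no reason to be zero. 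Hence elliptic unique continuation for $w$ (or for $A^2w=y_0$) never gets off the ground, and your ``largest vanishing set'' argument has nothing to bite on. The time-slice $y(t_j;y_0)$ is not a solution of any spatial elliptic equation either, so spatial UCP does not apply to it directly.

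The paper closes this gap by a completely different device. Define $V=\{y_0\in\mathcal H^{-4}:\chi_{\omega_j}y(t_j;y_0)=0,\,1\le j\le m\}$. First, the relaxed observability inequality (Proposition~\ref{20240228-proposition-RelaxedObservability}) gives a compactness estimate $\|y_0\|_{\mathcal H^{-4}}\lesssim \|y_0\|_{\mathcal H^{-6}}$ on $V$, so $V$ is \emph{finite dimensional}. Second, one turn of your bootstrap (so $Ay_0\in\mathcal H^{-4}$), together with $y(t_j;Ay_0)=Ay(t_j;y_0)$, shows $AV\subset V$. Hence $-A|_V$ has an eigenvector $e\in V\setminus\{0\}$. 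For an eigenfunction the solution factorizes as $y(t;e)=\xi(t)e$; backward uniqueness at $\{t_j\}$ forces $\xi(t_{j_0})\neq 0$ for some $j_0$, whence $e=0$ on $\omega_{j_0}$, and \emph{now} the classical unique continuation for Dirichlet eigenfunctions of $-\Delta$ gives $e=0$, a contradiction. The reduction to eigenfunctions is precisely the missing idea in your proposal.
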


\begin{remark}%\label{remark on prop3.1}
    We do not know
    whether \eqref{3.1wang} still holds when $\{ \omega_j \}_{j=1}^m$ only satisfies the weak geometric observation condition \eqref{20240215-NecessaryGeometricObservationCondition}.
\end{remark}

\begin{proof}[Proof of Proposition \ref{20240228-proposition-UniqueContinuation}]
  We define the following linear subspace of $\mathcal H^{-4}$:
  \begin{align}\label{20240317-yubiao-InvisibleInitialData}
      V := \Big\{ y_0 \in \mathcal H^{-4} ~:~
                    \chi_{\omega_j}y(t_j;y_0) = 0, ~1\leq j \leq m
           \Big\}.
  \end{align}
Then, \eqref{3.1wang} is equivalent to that $V=\{0\}$.
To show the latter, we suppose, by
 contradiction, that
\begin{align}\label{20240317-yubiao-NontrivalInvisibleSpace}
    V \neq \{0\}.
\end{align}
We will find a contradiction to \eqref{20240317-yubiao-NontrivalInvisibleSpace} by  several steps.

\vskip 5pt
\noindent\textit{Step 1. We show that $V$ is of finite dimension in $\mathcal H^{-4}$.}
\vskip 5pt

 To this end, we arbitrarily take a bounded sequence $\{y_{0,k}\}_{k\geq1} \subset V$. Then, there is a subsequence of $(k)_{k\geq1}$,
   denoted in the same manner, such that
\begin{align}\label{3.4Ma}
y_{0,k}\rightharpoonup \hat{y} ~\text{weakly in}~  \mathcal{H}^{-4},\;\;\mbox{as}\;\;k\rightarrow +\infty,
\end{align}
and
\begin{align}\label{3.5Ma}
y_{0,k}\rightarrow \hat{y} ~\text{strongly in}~  \mathcal{H}^{-6},\;\;\mbox{as}\;\;k\rightarrow +\infty.
\end{align}

Meanwhile, it follows from 
Lemma \ref{20240226-proposition-DecompsotionOfSolutions} %\cite[Proposition 7.1]{WZZ-JMPA-2022} 
that for each $j\in\{1,\ldots,m\}$, $\Phi(t_j)\in\mathcal{L}(\mathcal{H}^{-6})$, where $\Phi$ is given by \eqref{2.1WANG}.
This, along with \eqref{3.5Ma}, leads to that for each $j\in\{1,\ldots,m\}$,
\begin{align}\label{3.6Ma}
   \Phi(t_j)y_{0,k}\rightarrow\Phi(t_j)\hat{y} ~\text{strongly in}~  \mathcal{H}^{-6},\;\;\mbox{as}\;\;k\rightarrow +\infty.
\end{align}
By \eqref{2.1WANG} and \eqref{20240317-yubiao-InvisibleInitialData}, we see that
\begin{align*}
   \chi_{\omega_j}\Phi(t_j)y_{0,k}=\chi_{\omega_j}y(t_j;y_{0,k}) = 0 \ \text{for each}~ j=1,\ldots,m,
\end{align*}
which, along with \eqref{3.6Ma}, yields that
\begin{align*}
   \chi_{\omega_j}y(t_j;\hat{y}) = \chi_{\omega_j}\Phi(t_j)\hat{y} = 0 \ \text{ for each}~ j=1,\ldots,m.
\end{align*}
Since $\hat{y}\in \mathcal{H}^{-4}$ (see \eqref{3.4Ma}), the above, along with  \eqref{20240317-yubiao-InvisibleInitialData}, leads to
\begin{align}\label{3.7Ma}
   \hat{y}  \in V   \subset  \mathcal H^{-4}.
\end{align}

At the same time, by \eqref{20240215-GeometricObservationCondition}, we can use Proposition \ref{20240228-proposition-RelaxedObservability}
to obtain \eqref{20240215-RelaxedSamplingObservability}.
Now, it follows from  \eqref{20240215-RelaxedSamplingObservability},  \eqref{3.7Ma}, \eqref{20240317-yubiao-InvisibleInitialData} and \eqref{3.5Ma} that
\begin{align*}
    y_{0,k}\rightarrow \hat{y} ~\text{ strongly in}~  \mathcal{H}^{-4},\;\;\mbox{as}\;\;k\rightarrow +\infty.
\end{align*}
Thus, we have proven that any bounded sequence in the linear subspace $V\subset \mathcal H^{-4}$ has a convergent subsequence in $\mathcal H^{-4}$, which implies that
the  $V$ is of finite dimension (see, for example, \cite[Theorem 6 on page 43]{Peter-2002}).

\vskip 5pt
\noindent\textit{Step 2. We show that $AV\subset V$.}
\vskip 5pt

 For this purpose, we arbitrarily fix $y_0 \in V$. Two facts are given in order. First, it follows from \eqref{20240317-yubiao-InvisibleInitialData} that
\begin{align}\label{20240317-yubiao-InformationOfInvisibleInitialData}
    y_0 \in \mathcal H^{-4}
    ~\text{and}~
    \chi_{\omega_j} y(t_j;y_0) =0,\;j=1,\ldots,m.
\end{align}
Second, since $\{ \omega_j \}_{j=1}^m$ satisfies the geometric observation condition for the    equation \eqref{20240215-OriginalEquation}
    at $\{ t_j \}_{j=1}^m$, it follows from \eqref{20240215-GeometricObservationCondition}  that
\begin{align}\label{20240317-yubiao-CoveringWholePhysicalDomain}
     \overline{\Omega}  \subset    \cup_{j \in J_1}  \omega_j
     ~~\text{with}~~ 
     J_1 :=  \big\{ 
        j \in \mathbb N^+ \cap [1,m]  ~:~  M(t_j) \neq  0  
    \big\}. 
\end{align}

We now claim
\begin{align}\label{3.7wang}
A^{-2}y_0\in \mathcal{H}^2,\;\mbox{i.e.}, \;Ay_0\in \mathcal{H}^{-4}.
\end{align}
%
% First we show \eqref{3.7wang} under the following assumption:
% \begin{align}\label{20240317-yubiao-NontrivialValuesOfMemoryKernel}
%     M(t_j)  \neq 0  ~~\text{for each}~ j=1,\ldots,m.
% \end{align}
% % Then, there exists a partition of unity $\{\rho_j\}_{j=1}^m  \subset C_0^{\infty}(\mathbb R^n)$ such that
% % \begin{align}\label{20240317-yubiao-PartitionOfUnity}
% %     (\text{supp}\,\rho_j) \cap \overline{\Omega}  \Subset \omega_j,
% %     ~\forall\ j=1,\ldots,m
% %     ~~\text{and}~~
% %     \sum_{j=1}^m \rho_j \equiv 1
% %     ~\text{over}~
% %     \Omega.
% % \end{align}
Indeed, since $y_0 \in \mathcal H^{-4}$, we have $A^{-3}y_0\in \mathcal{H}^2$. Then by  \eqref{2.1Wang} (where $\hat s=2$)
in Lemma \ref{20240226-proposition-DecompsotionOfSolutions}, we see that
\begin{align*}
   R(t_j;A)(t_jA)^{-3}y_0\in \mathcal{H}^2,\;j=1,\dots,m,
\end{align*}
which, together with \eqref{2.1WANG} and \eqref{20240229-DecompositionOfSolutions} (in
Lemma \ref{20240226-proposition-DecompsotionOfSolutions}), yields
% \begin{align*}
%     y(t_j;y_0) + M(t_j) A^{-2} y_0  \in \mathcal H^2\;\;\mbox{for each}\;\;j=1,\dots,m.
% \end{align*}
% This yields
\begin{align}\label{3.9wang}
\rho_j:= y(t_j;y_0) + M(t_j) A^{-2} y_0\in \mathcal{H}^2 = H^2(\Omega) \cap H_0^1(\Omega),
~j=1\dots,m.
\end{align}
From \eqref{3.9wang} and  the second equality in \eqref{20240317-yubiao-InformationOfInvisibleInitialData}, we find
\begin{align}\label{3.10wang}
\rho_j|_{\omega_j}=  \big( M(t_j) A^{-2} y_0 \big)|_{\omega_j},\;j=1\dots,m.
\end{align}
At the same time, because of \eqref{20240317-yubiao-CoveringWholePhysicalDomain}, by the partition of unity, there are functions $\phi_j$ ($j\in J_1$) in $C_0^{\infty}(\mathbb R^n)$ such that
\begin{align*}
    \overline{\Omega}   \cap  \text{supp}\, \phi_j    \subset \subset  \omega_j
    ~(j \in J_1)
    ~~\text{and}~~
    \sum_{j\in J_1}  \phi_j =1 
    ~~\text{over}~~  \overline{\Omega}.
\end{align*}
Since $M(t_j)\neq 0$ for $j\in J_1$, the above, together with \eqref{3.10wang} and \eqref{3.9wang}, implies
\begin{align*}
    A^{-2} y_0   =&  \sum_{j \in J_1}   \phi_j  A^{-2} y_0
    = \sum_{j \in J_1}   \phi_j M(t_j)^{-1}  \rho_j 
    \nonumber\\
    =& \sum_{j \in J_1}   M(t_j)^{-1}  (\phi_j  \rho_j)
    \in H^2(\Omega) \cap H_0^1(\Omega) 
    = \mathcal H^2,
\end{align*}
which leads to \eqref{3.7wang}.

% Now, using \eqref{20240317-yubiao-CoveringWholePhysicalDomain}, \eqref{20240317-yubiao-NontrivialValuesOfMemoryKernel},
% \eqref{3.9wang} and \eqref{3.10wang}, noting that each $\omega_j$ intersects with some $\omega_{j'}$ with $j\neq j'$,
% we can easily verify \eqref{3.7wang} for the case that \eqref{20240317-yubiao-NontrivialValuesOfMemoryKernel} holds.
% {\bf Yubbiao, we need more details here?}

% Next, we will show \eqref{3.7wang} without  the  assumption
% \eqref{20240317-yubiao-NontrivialValuesOfMemoryKernel}. Suppose that $M(t_{j_0})=0$ for some $j_0\in\{1,\dots,m\}$.
% Then we kick out $\omega_{j_0}$ from $\{\omega_j\}_{j=1}^m$. After relabeling, we get from \eqref{20240215-GeometricObservationCondition}
% that
% \begin{align*}
%    \overline{\Omega}  \subset    \cup_{j=1}^{m-1} \omega_j.
% \end{align*}
%  After at most $(m-1)$ steps, we can find
% $\hat m\in\{1,\dots,m\}$ such that
% \begin{align*}
%    \overline{\Omega}  \subset    \cup_{j=1}^{\hat m} \omega_j\;\;\mbox{and}\;\; M(t_j)  \neq 0  ~~\text{for each}~ j=1,\ldots,\hat m.
% \end{align*}
% From the above, we can use the same way as that used to prove \eqref{3.7wang} under  the  assumption
% \eqref{20240317-yubiao-NontrivialValuesOfMemoryKernel} to obtain \eqref{3.7wang} for the current case.

Finally, since 
\begin{align*}
  y(t_j;Ay_0)=Ay(t_j;y_0),
\end{align*}
we see from the second equality in 
\eqref{20240317-yubiao-InformationOfInvisibleInitialData}   that
\begin{align*}
y(t_j;Ay_0)|_{\omega_j }  = \big( A y(t_j;y_0) \big)|_{\omega_j }=0, \;j=1,\dots,m,
\end{align*}
which, together with \eqref{3.7wang} and \eqref{20240317-yubiao-InvisibleInitialData},  leads to $Ay_0\in V$. Hence $AV\subset V$.

\vskip 5pt
\noindent\textit{Step 3. We find a contradiction to \eqref{20240317-yubiao-NontrivalInvisibleSpace}.}
\vskip 5pt

By Steps 1-2, we see that  $A|_{V}$ is a linear operator from the finite dimensional subspace $V$ to itself. Thus, it follows from \eqref{20240317-yubiao-NontrivalInvisibleSpace} that  $-A|_{V}$ has at least one eigenvalue $\lambda$. Write $e$ for the corresponding
eigenfunction. It is clear that 
\begin{align}\label{3.11wang}
e\in V\setminus\{0\}.
\end{align}
Then, by the spectral method, there is a function $\xi(\cdot)$ over $[0,+\infty)$ such that
\begin{align*}
y(t;e)=\xi(t)e,\;\;t\geq 0,
\end{align*}
which leads to
\begin{align}\label{3.13wang}
y(t_j;e)=\xi(t_j)e,\;\;j=1,\dots,m.
\end{align}

Now we claim that there is $j_0\in\{1,\dots,m\}$ such that
\begin{align}\label{3.14wang}
\xi(t_{j_0})\neq 0.
\end{align}
By contradiction, we suppose that the above is not true. Then we have
\begin{align}\label{3.15wang}
\xi(t_{j})=0,\;\;j=1,\dots, m.
\end{align}
Then by \eqref{3.13wang} and \eqref{3.15wang}, we find
\begin{align*}
y(t_j;e)=0,\;\;j=1,\dots, m.
\end{align*}
Since $e\in L^2(\Omega)$, the above, along with the backward uniqueness \eqref{20240215-BackwardUniqueness}, implies that $e=0$, which contradicts to
\eqref{3.11wang}. So \eqref{3.14wang} is true.

Since $e\in V$, it follows from \eqref{20240317-yubiao-InvisibleInitialData} that
$$
\chi_{\omega_{j_0}}y(t_{j_0};e)=0,
$$
which, along with \eqref{3.13wang}, leads to
$$
\chi_{\omega_{j_0}}\xi(t_{j_0})e=0.
$$
With \eqref{3.14wang}, this shows
$
\chi_{\omega_{j_0}}e=0$.
This, along with the unique continuation property of the eigenfunctions of $-A$ (see, for instance, \cite{Lin-CPAM-1990}),  leads to
$e=0$, which contradicts \eqref{3.11wang}.

Thus, we complete the proof of Proposition \ref{20240228-proposition-UniqueContinuation}.
\end{proof}

\section{Proof of main result}\label{ProofOfMainResults}

This section is devoted to the proofs of Theorems \ref{20240215-FirstMainTheorem-SamplingObservability} and \ref{20240318-yubiao-MeomeryKernelDeterminesTimeInstants}.
% As indicated before, the solutions to system \eqref{20240215-OriginalEquation} exhibit a hyperbolic nature that was studied in \cite{WZZ-JMPA-2022}. We will prove Theorem \ref{20240215-FirstMainTheorem-SamplingObservability} by modifying the well-known three-step strategy for the observability of wave equations. This idea is also used in the two-sided observability for system \eqref{20240215-OriginalEquation} with the measurements made over time intervals (see \cite{WZZ-arXiv-2023}).

\begin{proof}[Proof of Theorem \ref{20240215-FirstMainTheorem-SamplingObservability}]
First of all, we recall $(iv)$ of      Remark \ref{remark-two-side}. We organize the proof by two steps.

\vskip 5pt
\noindent {\it Step 1. We prove  $(i)\Rightarrow (ii)$.}
\vskip 5pt

The second inequality in \eqref{20240226-SamplingObservability} follows from \eqref{20240229-ContinuityWithImprovedFourOrders} with $s=-4$
at once. 
We now prove the first inequality in \eqref{20240226-SamplingObservability}.
By  contradiction, we suppose that  $(i)$ holds (i.e., \eqref{20240215-GeometricObservationCondition} is true), but
the first inequality in \eqref{20240226-SamplingObservability} fails.
Then, there is a sequence $(y_{0,k})_{k\geq 1} \subset L^2(\Omega)$ such that
\begin{align}\label{20240228-AssumptionInProofOfFirstMainTheorem}
    \|y_{0,k}\|_{\mathcal H^{-4}} = 1,\;\; k\in \mathbb N^+
\end{align}
and  
\begin{align}\label{4.2Wang}
        \lim_{k\rightarrow +\infty}
    \sum_{j=1}^m \| \chi_{\omega_j} y(t_j;y_{0,k}) \|_{L^2(\Omega)} = 0. 
\end{align}
% which implies that for each $j\in\{1,\dots,m\}$,
% \begin{align}\label{4.3wang}
%     \chi_{\omega_j} y(t_j;y_{0,k})\rightarrow 0\;\;\mbox{strongly in}\;L^2(\Omega),\;\;\mbox{as}\;\;k\rightarrow+\infty.
% \end{align}
By \eqref{20240228-AssumptionInProofOfFirstMainTheorem}, we can extract  a subsequence from $(y_{0,k})_{k\geq 1}$,  denoted by the same manner,
such that for some $\hat y_0 \in \mathcal H^{-4}$,
\begin{align}\label{20240228-WeakConvergenceInProofOfFirstMainTheorem}
    y_{0,k} \rightharpoonup \hat y_0 ~\text{weakly in}~  \mathcal H^{-4},\;\;\mbox{as}\;\;k\rightarrow +\infty.
\end{align}
% Since for each $j\in\{1,\dots,m\}$, we have that $y(t_j;y_{0,k})=\Phi(t_j)y_{0,k}$  and $\Phi(t_j)\in \mathcal{L}(\mathcal{H}^{-4})$ (see \cite[Proposition 7.1]{WZZ-JMPA-2022}),
It follows from \eqref{20240228-WeakConvergenceInProofOfFirstMainTheorem} and \eqref{20240229-DecompositionOfSolutions} that for each $j\in \{1, \ldots, m\}$, 
\begin{align*}
    y(t_j;y_{0,k}) \rightharpoonup y(t_j; \hat y_{0})
    ~\text{weakly in}~   L^2(\Omega),
    \;\;\mbox{as}\;\;
    k \rightarrow +\infty.
\end{align*}
With \eqref{4.2Wang}, the above
 yields
\begin{align}\label{4.5wang}
   \chi_{\omega_j} y(t_j; \hat y_{0}) = 0
   ~\text{in}~ L^2(\Omega)
    ~\text{for each }~ j \in \{1,\ldots,m\}.
\end{align}
Since the equation \eqref{20240215-OriginalEquation} has the backward uniqueness at $\{t_j\}_{j=1}^m$, and because the geometric observation condition \eqref{20240215-GeometricObservationCondition} holds, we can use
 Proposition \ref{20240228-proposition-UniqueContinuation} and \eqref{4.5wang} to get
 \begin{align*}
 \hat y_0=0\;\mbox{in}\; \mathcal{H}^{-4}.
 \end{align*}
 With \eqref{20240228-WeakConvergenceInProofOfFirstMainTheorem}, the above leads to
\begin{align}\label{20240228-StrongConvergenceInProofOfFirstMainTheorem}
     y_{0,k} \rightarrow 0\;\;\mbox{strongly in}\;\;
     \mathcal H^{-6},\;\mbox{as}\; k\rightarrow+\infty.
\end{align}

Now, since \eqref{20240215-GeometricObservationCondition} was assumed,
we can use Proposition \ref{20240228-proposition-RelaxedObservability} to get \eqref{20240215-RelaxedSamplingObservability}. Then it follows from \eqref{20240215-RelaxedSamplingObservability} (where $y_0=y_{0,k}$), \eqref{20240228-StrongConvergenceInProofOfFirstMainTheorem}
and \eqref{4.2Wang} that
\begin{align*}
    y_{0,k} \rightarrow 0\;\;\mbox{strongly in}\;\;
     \mathcal H^{-4},\;\mbox{as}\; k\rightarrow+\infty,
\end{align*}
which contradicts \eqref{20240228-AssumptionInProofOfFirstMainTheorem}. Therefore, the first inequality in \eqref{20240226-SamplingObservability} is true. Hence, $(ii)$ holds.

\vskip 5pt
\noindent{\it Step 2. We prove  $(ii)\Rightarrow (iii)$.}
\vskip 5pt

Assume that $(ii)$ is true. Then \eqref{20240215-RelaxedSamplingObservability} holds clearly. Thus, we can use Proposition \ref{20240228-proposition-RelaxedObservability} to get the weak geometric condition \eqref{20240215-NecessaryGeometricObservationCondition}, i.e., $(iii)$ holds.
 \vskip 5pt
Hence, we complete the proof of Theorem \ref{20240215-FirstMainTheorem-SamplingObservability}.
\end{proof}

Next, we go to prove Theorem \ref{20240318-yubiao-MeomeryKernelDeterminesTimeInstants}.

\begin{proof}[Proof of Theorem \ref{20240318-yubiao-MeomeryKernelDeterminesTimeInstants}]
We organize the proof by two steps.

\vskip 5pt
\noindent {\it Step 1. We prove  $(i)\Rightarrow (ii)$.}
\vskip 5pt

We suppose that $(i)$ holds for some open subsets $\omega_1,\dots,\omega_m$ in $\overline{\Omega}$, i.e., \eqref{20240226-SamplingObservability} holds for $\{t_j\}_{j=1}^m$ and $\{\omega_j\}_{j=1}^m$. 
First of all, the first inequality in \eqref{20240226-SamplingObservability} clearly implies that the equation \eqref{20240215-OriginalEquation} has the backward uniqueness at $\{t_j\}_{j=1}^m$.

Next, by the aforementioned backward uniqueness and \eqref{20240226-SamplingObservability}, we can apply Theorem \ref{20240215-FirstMainTheorem-SamplingObservability} to get \eqref{20240215-NecessaryGeometricObservationCondition},
from which, \eqref{20240318-yubiao-NonzerosOfMemoeryKernel} follows at once. 
Thus, we have proven $(ii)$.

\vskip 5pt
\noindent {\it Step 2. We prove  $(ii)\Rightarrow (i)$.}
\vskip 5pt

Assume that $(ii)$ is true. Let
\begin{align}\label{4.7wang}
    J := \big\{ j \in \{1,\dots,m\}  ~:~  M(t_j) \neq 0  \big\}.
\end{align}
Because of \eqref{20240318-yubiao-NonzerosOfMemoeryKernel}, the set $J$ is not empty. Thus, we can take a sequence
$\{\widehat{\omega}_j\}_{j\in J}$ of  open subsets of $\overline{\Omega}$ such that
\begin{align}\label{4.8wang}
 \overline{\Omega}  \subset  \cup_{j\in J }  \hat \omega_j.
          \end{align}
Then there is a sequence $\{\widetilde{\omega}\}_{j=1}^m$ of  open subsets of $\overline{\Omega}$ such that $\{\widehat{\omega}_j\}_{j\in J}$
is a subsequence of $\{\widetilde{\omega}\}_{j=1}^m$, i.e.,
\begin{align}\label{4.9wang}
 \{\widehat{\omega}_j\}_{j\in J}\subset \{\widetilde{\omega}\}_{j=1}^m.
          \end{align}
Now, by \eqref{4.7wang}, \eqref{4.8wang} and \eqref{4.9wang}, one can directly check that
\begin{align}\label{4.10wang}
             \sum_{j=1}^m |M(t_j)| ~ \chi_{\widetilde{\omega}_j}(x)   >  0
             ~\text{for each}~  x\in \overline{\Omega}.
                  \end{align}
                  Note that \eqref{4.10wang} means that $\{\widetilde{\omega}\}_{j=1}^m$
                  satisfies the geometric observation condition
       for the equation \eqref{20240215-OriginalEquation}
at $\{t_j\}_{j=1}^m$.

Meanwhile, we already assumed in  $(ii)$ that the backward uniqueness \eqref{20240215-BackwardUniqueness} holds.
Thus, we can apply Theorem \ref{20240215-FirstMainTheorem-SamplingObservability} and  \eqref{4.10wang} to see that
 the equation \eqref{20240215-OriginalEquation} satisfies
 the two-sided sampling observability inequality at  $\{t_j\}_{j=1}^m$ and $\{\widetilde{\omega}_j\}_{j=1}^m$, i.e.,
 $(i)$ is true.

 Hence, we complete the proof of  Theorem \ref{20240318-yubiao-MeomeryKernelDeterminesTimeInstants}.
\end{proof}

\section{Further studies on the backward uniqueness}
\label{FurtherStudies}

This section presents several results and examples on  the backward uniqueness for the equation \eqref{20240215-OriginalEquation}
in the sense of Definition  \ref{def-back}.
We recall that $(\lambda_k)_{k\geq 1}$ and $(e_k)_{k\geq 1}$ are the eigenvalues and the corresponding normalized eigenfunctions of $-A$, respectively. Then each $y_0\in L^2(\Omega)$ and the corresponding solution $y(\cdot;y_0)$ (to \eqref{20240215-OriginalEquation})
can be expressed as 
\begin{align}\label{5.3Ma}
    y_0 = \sum_{k \geq 1}  y_{0,k} e_k
    ~~\text{and}~~
    y(t;y_0) = \sum_{k \geq 1}  y_k(t) e_k,~ t\geq 0,
\end{align}
for some $(y_{0,k})_{k\geq 1} \subset l^2$ and $(y_k(\cdot))_{k\geq 1} \subset C([0,+\infty); l^2)$, respectively.
Meanwhile, for each $k\in \mathbb{N}^+$, we write $x_{k}(\cdot)$ for  the solution to the following equation:
\begin{align}\label{20230318-yubiao-ParametrizedODEwithMemory}
    \begin{cases}
        x'(t)  +  \lambda_k x(t)  +  \int_0^t M(t-s) x(s) ds =0,~ t>0,\\
        x(0)=1.
    \end{cases}
\end{align}
Then by the Galerkin method, we have that  for each $k\in \mathbb N^+$,
\begin{align}\label{20240318-yubiao-FunctionalCalculusForSolutions}
    y_k(t) = y_{0,k} x_{k}(t), ~ t> 0.
\end{align}
Thus, solving $y(\cdot;y_0)$ is equivalent to solving \eqref{20230318-yubiao-ParametrizedODEwithMemory} for each $k\in \mathbb N^+$.
However, the latter is not easy. Indeed, 
 for  a general kernel $M\in C^2([0,+\infty))$, it follows from  \cite[Lemma 3.3]{WZZ-JMPA-2022} (where $\eta=\lambda_k$) that for each $k\in \mathbb N^+$,
\begin{align}\label{20240318-yubiao-IntegralExpressionOfODEWithMemory}
    x_{k}(t) = e^{-\lambda_k t} + \int_0^t K_M(t,s) e^{-\lambda_k s} ds,~t\geq0,
\end{align}
where
\begin{align}\label{K_M(t,s)}
    K_M(t,s) :=&
        \sum_{j \geq 1} \frac{ s^j }{ j! } \underset{j}{ \underbrace{(-M)*\cdots*(-M)} } (t-s),~
    t\geq s.
\end{align}

As mentioned in $(i)$ of Remark \ref{remark-back}, 
  the equation \eqref{20240215-OriginalEquation} essentially differs  from the pure heat equation from perspective of the backward uniqueness, and the backward uniqueness of  \eqref{20240215-OriginalEquation} (given by  Definition \ref{def-back}) heavily depends on the memory kernel $M$. We now study this property for several concrete $M$.

%We recall the backward uniqueness for the heat equation: If  $y_0 \in L^2(\Omega)$ satisfies that for some $T>0$, 
%$\hat{y}(T;y_0)=0$ in $L^2(\Omega)$ (where  $\hat{y}(\cdot;y_0)$ is  the solution of the heat equation, i.e., the equation \eqref{20240215-OriginalEquation} with $M=0$), then $y_0=0$  in $L^2(\Omega)$. However, for \eqref{20240215-OriginalEquation}
%with a general memory, such  property is gone. We will see later that for some $M$, the above property still holds, but for some other $M$, it is not true. This is why we give Definition \ref{def-back}. 

Our studies are  
 closely related to  the zero points of $x_k$. 
Thus, for each  $k \in \mathbb{N}^+$, we define 
\begin{align}\label{5.9Wang}
  \mathcal{N}_{k} :=\{t\in \mathbb{R}^+\; :\; x_{k}(t)=0\}.
\end{align}

\begin{theorem}\label{Thm_singleton}
  The following statements are equivalent:
  
  $(i)$ For any $t_0>0$, the equation \eqref{20240215-OriginalEquation} has the backward uniqueness at $\{t_0\}$.
  
  $(ii)$ For each $k\in\mathbb{N}^+$, $\mathcal{N}_{k} =\emptyset$, where $\mathcal{N}_{k}$ is given by \eqref{5.9Wang}.
\end{theorem}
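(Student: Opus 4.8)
The plan is to translate both statements into a statement about the zero sets $\mathcal N_k$ (see \eqref{5.9Wang}) of the scalar functions $x_k$ defined in \eqref{20230318-yubiao-ParametrizedODEwithMemory}, by means of the spectral decomposition. Writing $y_0=\sum_{k\ge1}y_{0,k}e_k\in L^2(\Omega)$, the identities \eqref{5.3Ma} and \eqref{20240318-yubiao-FunctionalCalculusForSolutions} give $y(t;y_0)=\sum_{k\ge1}y_{0,k}x_k(t)e_k$ for $t\ge0$; hence, since $(e_k)_{k\ge1}$ is an orthonormal basis of $L^2(\Omega)$, the condition $y(t_0;y_0)=0$ in $L^2(\Omega)$ is equivalent to $y_{0,k}\,x_k(t_0)=0$ for every $k\in\mathbb N^+$. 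This elementary equivalence is essentially the whole content of the theorem, so the proof will be short and mostly bookkeeping; I do not expect a genuine obstacle, and the only points requiring a little care are to record that \eqref{20240318-yubiao-FunctionalCalculusForSolutions} holds for all initial data in $L^2(\Omega)$ (which it does, by the Galerkin argument quoted in the text) and that ``backward uniqueness at $\{t_0\}$'' in Definition \ref{def-back} is precisely the case $m=1$, $t_1=t_0$.

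For the implication $(ii)\Rightarrow(i)$ I would fix an arbitrary $t_0>0$ and an arbitrary $y_0\in L^2(\Omega)$ with $y(t_0;y_0)=0$. By the equivalence above, $y_{0,k}\,x_k(t_0)=0$ for all $k$. Assumption $(ii)$ says $\mathcal N_k=\emptyset$, so $x_k(t_0)\ne0$ for every $k$; therefore $y_{0,k}=0$ for all $k$, i.e. $y_0=0$. Thus \eqref{20240215-BackwardUniqueness} holds with $m=1$ and $t_1=t_0$, and since $t_0>0$ was arbitrary, $(i)$ follows.

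For $(i)\Rightarrow(ii)$ I would argue by contraposition: suppose $(ii)$ fails, so there exist $k_0\in\mathbb N^+$ and $t_0\in\mathcal N_{k_0}$, i.e. $t_0>0$ and $x_{k_0}(t_0)=0$. Choosing $y_0:=e_{k_0}$, which is nonzero in $L^2(\Omega)$, the equivalence above gives $y(t_0;y_0)=x_{k_0}(t_0)e_{k_0}=0$ in $L^2(\Omega)$. Hence the equation does not have backward uniqueness at $\{t_0\}$, so $(i)$ fails. This completes the plan.
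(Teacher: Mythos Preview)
Your proposal is correct and follows essentially the same approach as the paper: both directions are obtained from the spectral identity $y(t;y_0)=\sum_{k\ge1}y_{0,k}x_k(t)e_k$, with $(ii)\Rightarrow(i)$ proved by reading off $y_{0,k}=0$ from $x_k(t_0)\ne0$, and $(i)\Rightarrow(ii)$ proved (by contraposition in your version, by contradiction in the paper) by testing with $y_0=e_{k_0}$ at a zero $t_0\in\mathcal N_{k_0}$.
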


\begin{remark}
  Theorem \ref{Thm_singleton} shows that, in general, the equation \eqref{20240215-OriginalEquation} does not have the same backward uniqueness property as that of the heat equation, but for such a kernel $M$  that $\mathcal{N}_{k} =\emptyset$ for each $k\in\mathbb{N}^+$, \eqref{20240215-OriginalEquation} has the same backward uniqueness property as that of the heat equation.  
 We will see such kernel in   
  Example \ref{remark5.8-w-10-7} given later.
  %\ref{corollary5.11}

\end{remark}

\begin{proof}[Proof of Theorem \ref{Thm_singleton}]
We organize the proof by two steps.

\vskip 5pt
\noindent{\it Step 1. We prove $(i)\Rightarrow(ii)$.}
\vskip 5pt

By contradiction, we suppose that $(i)$ holds, but $(ii)$ is not true.
Then there is $k_0\in\mathbb{N}^+$ and $t_0\in\mathbb{R}^+$ such that $t_0\in\mathcal{N}_{k_0}$ (i.e., $x_{k_0}(t_0) =0$). This, along with \eqref{5.3Ma}, \eqref{20240318-yubiao-FunctionalCalculusForSolutions} and \eqref{5.9Wang}, yields 
\begin{align*}
    y(t_0;e_{k_0})= x_{{k_0}}(t_0)e_{k_0}=0
\end{align*}
(where $e_{k_0}$ is an eigenfunction of $-A$). This, together with $(i)$, yields  $e_{k_0}=0$, which leads to a contradiction.

\vskip 5pt
\noindent{\it Step 2. We prove $(ii)\Rightarrow(i)$.}
\vskip 5pt

We suppose that $(ii)$ holds. Then, it follows from  \eqref{5.9Wang} that for each $k\in\mathbb{N}^+$, 
\begin{align}\label{x_neq_0}
x_k(t)\neq0,\; t\geq0.
\end{align}
We arbitrarily take $t_1\in\mathbb{R}^+$.
Let $y_0 \in L^2(\Omega)$ satisfy
$y(t_1;y_0) = 0$. Then, by  \eqref{5.3Ma} and \eqref{20240318-yubiao-FunctionalCalculusForSolutions},  we see that
\begin{align*}
   y_{0,k} x_{k}(t_1)=0 \;\;\mbox{for each}\;\; k\in\mathbb{N}^+.
\end{align*}
With \eqref{x_neq_0}, the above leads to that 
$$
y_{0,k}=0  ~~\text{for each}~~ k\in\mathbb{N}^+, 
$$
 which gives $y_0=0$. So $(i)$ is true.

Hence, we finish the proof of Theorem \ref{Thm_singleton}.
\end{proof}

\begin{theorem}\label{Thm_guide}
  Let $t_0>0$. Suppose that  $\cup_{k\geq1}\mathcal{N}_k \neq \mathbb{R}^+$. Then the following assertions are equivalent:

    $(i)$ Equation \eqref{20240215-OriginalEquation} has the backward uniqueness at $\{t_0\}$.

    $(ii)$ It holds that  $t_0\in\mathbb{R}^+\setminus\cup_{k\geq1}\mathcal{N}_k$.
\end{theorem}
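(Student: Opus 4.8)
The plan is to reduce Theorem \ref{Thm_guide} to the fibre-wise ODE description provided by \eqref{5.3Ma}, \eqref{20240318-yubiao-FunctionalCalculusForSolutions} and the definition \eqref{5.9Wang} of $\mathcal N_k$. Recall that the backward uniqueness at the single instant $\{t_0\}$ means: whenever $y_0\in L^2(\Omega)$ satisfies $y(t_0;y_0)=0$, then $y_0=0$. Expanding $y_0=\sum_{k\ge1}y_{0,k}e_k$ and using $y_k(t_0)=y_{0,k}x_k(t_0)$, the condition $y(t_0;y_0)=0$ is equivalent to $y_{0,k}x_k(t_0)=0$ for every $k$. Hence backward uniqueness at $\{t_0\}$ holds if and only if $x_k(t_0)\neq0$ for every $k\in\mathbb N^+$, which by \eqref{5.9Wang} is exactly the statement that $t_0\notin\mathcal N_k$ for all $k$, i.e. $t_0\in\mathbb R^+\setminus\cup_{k\ge1}\mathcal N_k$.

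So the proof splits into the two implications. For $(ii)\Rightarrow(i)$: assume $t_0\in\mathbb R^+\setminus\cup_{k\ge1}\mathcal N_k$, so $x_k(t_0)\neq0$ for every $k$. Given $y_0$ with $y(t_0;y_0)=0$, read off $y_{0,k}x_k(t_0)=0$ from \eqref{5.3Ma} and \eqref{20240318-yubiao-FunctionalCalculusForSolutions}; since $x_k(t_0)\neq0$ we get $y_{0,k}=0$ for all $k$, hence $y_0=0$. This is the easy direction and is essentially Step 2 of the proof of Theorem \ref{Thm_singleton} applied at a fixed instant.

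For $(i)\Rightarrow(ii)$: argue by contradiction. If $t_0\notin\mathbb R^+\setminus\cup_{k\ge1}\mathcal N_k$, then $t_0\in\cup_{k\ge1}\mathcal N_k$, so there is some $k_0$ with $x_{k_0}(t_0)=0$. Take $y_0=e_{k_0}$; then by \eqref{20240318-yubiao-FunctionalCalculusForSolutions}, $y(t_0;e_{k_0})=x_{k_0}(t_0)e_{k_0}=0$ while $e_{k_0}\neq0$, contradicting backward uniqueness at $\{t_0\}$. This mirrors Step 1 of Theorem \ref{Thm_singleton}.

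The hypothesis $\cup_{k\ge1}\mathcal N_k\neq\mathbb R^+$ is what guarantees the set in $(ii)$ is nonempty, so the equivalence is not vacuous; it plays no role in the logical argument itself beyond making the statement meaningful. I do not expect a genuine obstacle here — the only point needing a little care is the justification that $y(t_0;y_0)=0$ in $L^2(\Omega)$ is equivalent to the vanishing of every Fourier coefficient $y_{0,k}x_k(t_0)$, which follows directly from the orthonormality of $(e_k)_{k\ge1}$ and the expansion \eqref{5.3Ma}; everything else is the same fibre-wise bookkeeping already used for Theorem \ref{Thm_singleton}, now localized to a single time instant $t_0$ rather than quantified over all $t_0>0$.
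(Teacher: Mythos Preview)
Your proof is correct and follows essentially the same approach as the paper: both directions are handled by the fibre-wise reduction via \eqref{5.3Ma}, \eqref{20240318-yubiao-FunctionalCalculusForSolutions} and \eqref{5.9Wang}, with the contrapositive for $(i)\Rightarrow(ii)$ using $y_0=e_{k_0}$ and the direct argument for $(ii)\Rightarrow(i)$ reading off $y_{0,k}=0$ from $y_{0,k}x_k(t_0)=0$. Your remark that the hypothesis $\cup_{k\ge1}\mathcal N_k\neq\mathbb R^+$ serves only to make the equivalence non-vacuous is also consistent with the paper's treatment.
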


\begin{remark}
 For many  memory kernels, we have $\cup_{k\geq1}\mathcal{N}_k \neq \mathbb{R}^+$. For instance, when $M$ is real analytic, 
it follows from \eqref{20240318-yubiao-IntegralExpressionOfODEWithMemory}, \eqref{K_M(t,s)} and  \cite[Proposition 2.3]{WZZ-JMPA-2022} that each $x_k$ is a real analytic function. So  the set $\cup_{k\geq1}\mathcal{N}_k$ contains up to   countable points.

  %The case that  $\cup_{k\geq1}\mathcal{N}_k \neq \mathbb{R}^+$ happens for a lot of memory kernels. For instance, when $M$ is real analytic, 
 % the set $\cup_{k\geq1}\mathcal{N}_k$ contains up to   countable points.
  %Indeed, by \eqref{20240318-yubiao-IntegralExpressionOfODEWithMemory}, \eqref{K_M(t,s)} and  \cite[Proposition 2.3]{WZZ-JMPA-2022}, we see that each $x_k$ is a real analytic function.     
\end{remark}

\begin{proof}[Proof of Theorem \ref{Thm_guide}]
    We organize the proof by two steps.

\vskip 5pt
\noindent{\it Step 1. We prove $(i)\Rightarrow(ii)$.}
\vskip 5pt

By contradiction, we suppose that $(i)$ holds, but $(ii)$ is not true.
Then there is $k_0\in\mathbb{N}^+$ such that $t_0\in\mathcal{N}_{k_0}$. This, along with \eqref{5.3Ma}, \eqref{20240318-yubiao-FunctionalCalculusForSolutions} and \eqref{5.9Wang}, yields 
\begin{align*}
    y(t_0;e_{k_0})= x_{{k_0}}(t_0)e_{k_0}=0.
\end{align*}
The above, together with  $(i)$, implies that  $e_{k_0}=0$, which leads to a contradiction.

\vskip 5pt
\noindent{\it Step 2. We prove $(ii)\Rightarrow(i)$.}
\vskip 5pt

We suppose that $(ii)$ holds. Then, it follows from  \eqref{5.9Wang} that for each $k\in\mathbb{N}^+$, 
\begin{align}\label{x_t0_neq_0}
x_k(t_0)\neq0.
\end{align}
Let $y_0 \in L^2(\Omega)$ satisfy
$y(t_0;y_0) = 0$. Then, by  \eqref{5.3Ma} and \eqref{20240318-yubiao-FunctionalCalculusForSolutions},  we get that
\begin{align*}
   y_{0,k} x_{k}(t_0)=0 \;\;\mbox{for each}\;\; k\in\mathbb{N}^+.
\end{align*}
This, along with \eqref{x_t0_neq_0}, leads to that 
$$
y_{0,k}=0  ~~\text{for each}~~ k\in\mathbb{N}^+, 
$$
which gives $y_0=0$. So $(i)$ is true.

Hence, we complete the proof of Theorem \ref{Thm_guide}.
\end{proof}

\begin{theorem}\label{Thm_multipont}
Let $m \in \mathbb{N}^+$. Let $\{t_j\}_{j=1}^m\subset(0,+\infty)$. 
Then, the following statements are equivalent:

$(i)$ Equation \eqref{20240215-OriginalEquation} has the backward uniqueness at $\{t_j\}_{j=1}^m$. 

$(ii)$ For each $k\in \mathbb{N}^+$, there is  $j_k\in\{1,\dots,m\}$ such that
\begin{align*}%\label{multipoint}
  t_{j_k}\notin\mathcal{N}_k.
\end{align*}
\end{theorem}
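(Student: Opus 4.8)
The plan is to reduce the backward uniqueness, via the modal decomposition \eqref{5.3Ma} and the identity \eqref{20240318-yubiao-FunctionalCalculusForSolutions}, to a coordinate-wise condition, exactly along the lines of the proofs of Theorems \ref{Thm_singleton} and \ref{Thm_guide}. The starting observation is that, by \eqref{5.3Ma} and \eqref{20240318-yubiao-FunctionalCalculusForSolutions}, for $y_0 = \sum_{k\geq 1} y_{0,k} e_k \in L^2(\Omega)$ one has $y(t_j;y_0) = \sum_{k\geq 1} y_{0,k} x_k(t_j) e_k$; since $(e_k)_{k\geq 1}$ is orthonormal in $L^2(\Omega)$, the hypothesis $y(t_j;y_0) = 0$ in $L^2(\Omega)$ for every $j = 1,\dots, m$ is equivalent to the scalar relations $y_{0,k}\, x_k(t_j) = 0$ for all $k\in\mathbb{N}^+$ and all $j\in\{1,\dots,m\}$.

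For $(ii)\Rightarrow(i)$, I would fix $y_0$ with $y(t_j;y_0)=0$ in $L^2(\Omega)$ for all $j$, and for each $k$ pick the index $j_k$ provided by $(ii)$. The scalar relation above with $j=j_k$ gives $y_{0,k}\, x_k(t_{j_k}) = 0$; since $t_{j_k}\notin\mathcal{N}_k$ means exactly $x_k(t_{j_k})\neq 0$ (see \eqref{5.9Wang}), this forces $y_{0,k}=0$. As $k\in\mathbb{N}^+$ was arbitrary, $y_0 = 0$, which is $(i)$.

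For $(i)\Rightarrow(ii)$, I would argue by contraposition. If $(ii)$ fails, then there is $k_0\in\mathbb{N}^+$ with $t_j\in\mathcal{N}_{k_0}$, i.e. $x_{k_0}(t_j)=0$, for every $j\in\{1,\dots,m\}$. Taking $y_0 = e_{k_0}$, the identity above gives $y(t_j;e_{k_0}) = x_{k_0}(t_j)e_{k_0} = 0$ in $L^2(\Omega)$ for all $j$, while $e_{k_0}\neq 0$; hence the backward uniqueness at $\{t_j\}_{j=1}^m$ fails, i.e. $(i)$ fails.

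There is no genuine obstacle here: the argument uses only the orthonormality of $(e_k)_{k\geq1}$ together with \eqref{20240318-yubiao-FunctionalCalculusForSolutions}. The sole point meriting a little care is the logical structure of $(ii)$ and of its negation: $(ii)$ asserts that for \emph{each} mode $k$ \emph{some} sampling time avoids $\mathcal{N}_k$, so its negation is that \emph{one} mode $k_0$ is simultaneously annihilated at \emph{all} of $t_1,\dots,t_m$, which is precisely the situation exploited by the test datum $e_{k_0}$ in the contrapositive.
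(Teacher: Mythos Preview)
Your proposal is correct and follows essentially the same approach as the paper: both directions use the modal decomposition \eqref{5.3Ma}--\eqref{20240318-yubiao-FunctionalCalculusForSolutions} to reduce to the scalar conditions $y_{0,k}x_k(t_j)=0$, with $(ii)\Rightarrow(i)$ proved directly via $x_k(t_{j_k})\neq 0$, and $(i)\Rightarrow(ii)$ proved via the test datum $e_{k_0}$ (the paper phrases this as ``by contradiction'' rather than contraposition, but the argument is identical).
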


\begin{remark}
The statement $(ii)$ of 
Theorem \ref{Thm_multipont} provides us with a method to find $m\in \mathbb{N}^+$ and $\{t_j\}_{j=1}^m\subset(0,+\infty)$ such that the equation \eqref{20240215-OriginalEquation} has the backward uniqueness at $\{t_j\}_{j=1}^m$. 
%Example \ref{example_alpha} gives such a selection.

\end{remark}

\begin{proof}[Proof of Theorem \ref{Thm_multipont}]
We organize the proof by two steps.

\vskip 5pt
\noindent{\it Step 1. We prove $(i)\Rightarrow(ii)$.}
\vskip 5pt

  By contradiction, we suppose that $(i)$ holds, but $(ii)$ is not true.
Then there is $k_0\in\mathbb{N}^+$ such that $t_j\in\mathcal{N}_{k_0}$, for each $j=1,\dots,m$.
This, along with \eqref{5.3Ma}, \eqref{20240318-yubiao-FunctionalCalculusForSolutions} and \eqref{5.9Wang}, yields 
\begin{align*}
    y(t_j;e_{k_0})= x_{{k_0}}(t_j)e_{k_0}=0 \;\;\mbox{for each}~ j=1,\ldots,m.
\end{align*}
With  $(i)$, the above implies that 
$e_{k_0}=0$, which leads to a contradiction.

\vskip 5pt
\noindent{\it Step 2. We prove $(ii)\Rightarrow(i)$.}
\vskip 5pt

We suppose that (ii) holds.
%Using the same proof method as in Theorem 2 (\noindent{\it Step 2}), we can show that $(i)$ holds.
Then, it follows from \eqref{5.9Wang} that when $k\in\mathbb{N}^+$,
%there exists $j_k\in\{1,\dots,m\}$, such that $t_{j_k}\notin\mathcal{N}_k$, i.e., (see \eqref{5.9Wang}) 
\begin{align}\label{x_neq_0_infin}
    x_k(t_{j_k})\neq0.
\end{align}
Let $y_0 \in L^2(\Omega)$ satisfy
\begin{align*}
  y(t_j;y_0) = 0  \;\;\mbox{for each}~ j=1,\ldots,m.
\end{align*}
With \eqref{5.3Ma} and \eqref{20240318-yubiao-FunctionalCalculusForSolutions}, the above yields
$$
y_{0,k} x_k(t_{j_k}) =0  ~~\text{for each}~~ k\in\mathbb{N}^+. 
$$
This, together with   \eqref{x_neq_0_infin}, implies
$$
y_{0,k} =0  ~~\text{for each}~~ k\in\mathbb{N}^+, 
$$
which gives $y_0=0$. So $(i)$ holds.

Hence, we finish the proof of Theorem \ref{Thm_multipont}.
\end{proof}

Next, we will show applications of the above theorems to  several concrete and important kernels.

\begin{example}\label{remark5.8-w-10-7}
Consider the kernel: $M(t)\leq0$ ($t\geq0$). 
By \eqref{K_M(t,s)}, we get that $K_M(t,s)\geq0$ for $t\geq s\geq0$. This, along with \eqref{20240318-yubiao-IntegralExpressionOfODEWithMemory} and \eqref{5.9Wang}, yields that $\mathcal{N}_k=\emptyset$ for each $k\in\mathbb{N}^+$.
\end{example}

\begin{corollary}%\label{corollary5.11}
Let $M(t)\leq0$ ($t\geq0$). Then, for any $t_0\in (0,+\infty)$, the equation \eqref{20240215-OriginalEquation} has the backward uniqueness at $\{t_0\}$.
\end{corollary}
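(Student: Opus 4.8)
The plan is to combine Example \ref{remark5.8-w-10-7} with Theorem \ref{Thm_singleton}. Indeed, Example \ref{remark5.8-w-10-7} already establishes that when $M(t)\le 0$ for all $t\ge 0$, the convolution kernel $K_M(t,s)$ defined by \eqref{K_M(t,s)} is nonnegative for $t\ge s\ge 0$, and hence, by the integral representation \eqref{20240318-yubiao-IntegralExpressionOfODEWithMemory}, we have $x_k(t) = e^{-\lambda_k t} + \int_0^t K_M(t,s)e^{-\lambda_k s}\,ds \ge e^{-\lambda_k t} > 0$ for every $t\ge 0$ and every $k\in\mathbb N^+$. Consequently $\mathcal N_k = \{t\in\mathbb R^+ : x_k(t)=0\} = \emptyset$ for each $k\in\mathbb N^+$.

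The next step is simply to invoke Theorem \ref{Thm_singleton}: since $\mathcal N_k=\emptyset$ for each $k\in\mathbb N^+$, statement $(ii)$ of that theorem holds, so statement $(i)$ holds as well, i.e.\ for any $t_0>0$ the equation \eqref{20240215-OriginalEquation} has the backward uniqueness at $\{t_0\}$. This is exactly the assertion of the corollary. Alternatively, one could bypass Theorem \ref{Thm_singleton} and argue directly: if $y(t_0;y_0)=0$ for some $y_0=\sum_{k\ge1}y_{0,k}e_k\in L^2(\Omega)$, then by \eqref{5.3Ma} and \eqref{20240318-yubiao-FunctionalCalculusForSolutions} we get $y_{0,k}x_k(t_0)=0$ for every $k$, and since $x_k(t_0)>0$ this forces $y_{0,k}=0$ for all $k$, hence $y_0=0$.

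There is essentially no obstacle here: the corollary is a direct specialization of Theorem \ref{Thm_singleton} together with the computation already carried out in Example \ref{remark5.8-w-10-7}. The only minor point worth making explicit is the strict positivity $x_k(t)>0$ (rather than merely nonvanishing), which follows immediately from the nonnegativity of $K_M$ and the strict positivity of the leading term $e^{-\lambda_k t}$; everything else is a routine reference to the earlier results.

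\begin{proof}
By Example \ref{remark5.8-w-10-7}, since $M(t)\le 0$ for all $t\ge 0$, we have $K_M(t,s)\ge 0$ for all $t\ge s\ge 0$, where $K_M$ is given by \eqref{K_M(t,s)}. Hence, by \eqref{20240318-yubiao-IntegralExpressionOfODEWithMemory},
\begin{align*}
    x_k(t) = e^{-\lambda_k t} + \int_0^t K_M(t,s) e^{-\lambda_k s}\,ds \geq e^{-\lambda_k t} > 0,~ t\geq 0,
\end{align*}
for each $k\in\mathbb N^+$. Therefore, by \eqref{5.9Wang}, $\mathcal N_k=\emptyset$ for each $k\in\mathbb N^+$, i.e., statement $(ii)$ of Theorem \ref{Thm_singleton} holds. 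By Theorem \ref{Thm_singleton}, statement $(i)$ of that theorem holds as well, which is exactly the desired conclusion: for any $t_0\in(0,+\infty)$, the equation \eqref{20240215-OriginalEquation} has the backward uniqueness at $\{t_0\}$.
\end{proof}
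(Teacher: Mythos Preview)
Your proof is correct and takes essentially the same approach as the paper: invoke Example \ref{remark5.8-w-10-7} to get $\mathcal N_k=\emptyset$ for all $k$, then apply Theorem \ref{Thm_singleton}. You simply spell out the positivity $x_k(t)\ge e^{-\lambda_k t}>0$ a bit more explicitly than the paper does.
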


\begin{proof}
Since $\mathcal{N}_k=\emptyset$, it follows from Theorem \ref{Thm_singleton} that for any $t_0\in(0,+\infty)$, the equation \eqref{20240215-OriginalEquation} has the backward uniqueness at $\{t_0\}$.
\end{proof}

\begin{example}%\label{example_t}
   Consider the kernel: $M(t) = t$ ($t\geq 0$). Direct computations lead to
   \begin{align}\label{zero_of_t}
      \mathcal{N}_k= \left\{
        t\in\mathbb{R}^+\;|\;
        e^{ \alpha_k t} = \tilde{C_k} \sin(\beta_k t + \varphi_k) 
    \right\} 
    \;\;\mbox{for each}\;\; k\in\mathbb{N}^+,
   \end{align}
   where $\alpha_k<0$ and $\tilde{C_k},\beta_k,\varphi_k\in\mathbb{R}$. 
   
\end{example}  

\begin{corollary}%\label{corollary5.13}
Let $M(t) = t$ ($t\geq 0$). Then for any $t_0\in\mathbb{R}^+\setminus\cup_{k\geq1}\mathcal{N}_k$ (where $\mathcal{N}_k$ is given by \eqref{zero_of_t}), the equation \eqref{20240215-OriginalEquation} has the backward uniqueness at $\{t_0\}$.
\end{corollary}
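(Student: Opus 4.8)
The plan is to invoke Theorem \ref{Thm_guide} directly, so the only real content is to verify its hypothesis, namely that $\cup_{k\geq 1}\mathcal N_k \neq \mathbb R^+$. First I would recall from \eqref{zero_of_t} that for the kernel $M(t)=t$, each zero set $\mathcal N_k$ consists of the positive $t$ solving $e^{\alpha_k t} = \widetilde C_k \sin(\beta_k t + \varphi_k)$ with $\alpha_k < 0$. The key structural observation is that each $x_k$ is a real-analytic function on $[0,+\infty)$: indeed, by \eqref{20240318-yubiao-IntegralExpressionOfODEWithMemory}--\eqref{K_M(t,s)} with $M(t)=t$ the convolution powers $(-M)*\cdots*(-M)$ are just monomials, the series defining $K_M(t,s)$ converges locally uniformly, and $x_k$ solves a linear ODE with polynomial coefficients (differentiating \eqref{20230318-yubiao-ParametrizedODEwithMemory} once turns it into $x'' + \lambda_k x' + \int_0^t M'(t-s)x(s)\,ds + M(0)x = 0$, i.e. $x''' + \lambda_k x'' + x = 0$ since $M(t)=t$, $M'\equiv 1$, $M(0)=0$). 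Hence $x_k$ is real-analytic and not identically zero (as $x_k(0)=1$), so $\mathcal N_k$ has no accumulation point in $[0,+\infty)$ and is therefore at most countable.

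Next I would conclude that $\cup_{k\geq 1}\mathcal N_k$ is a countable union of countable sets, hence countable, and in particular $\cup_{k\geq 1}\mathcal N_k \neq \mathbb R^+$ since $\mathbb R^+$ is uncountable. This is exactly the hypothesis of Theorem \ref{Thm_guide}. Applying that theorem, for each $t_0 \in \mathbb R^+ \setminus \cup_{k\geq 1}\mathcal N_k$, statement $(ii)$ of Theorem \ref{Thm_guide} holds, so statement $(i)$ holds as well, i.e. the equation \eqref{20240215-OriginalEquation} has the backward uniqueness at $\{t_0\}$. This is precisely the assertion of the corollary.

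The only step requiring genuine care is the real-analyticity (or at least the discreteness of the zero set) of each $x_k$; everything else is a one-line citation of Theorem \ref{Thm_guide}. One could sidestep even this by quoting \cite[Proposition 2.3]{WZZ-JMPA-2022} together with \eqref{20240318-yubiao-IntegralExpressionOfODEWithMemory}--\eqref{K_M(t,s)}, exactly as the remark following Theorem \ref{Thm_guide} does, since $M(t)=t$ is real-analytic; alternatively, the explicit description \eqref{zero_of_t} already exhibits $\mathcal N_k$ as the zero set of $e^{\alpha_k t} - \widetilde C_k \sin(\beta_k t+\varphi_k)$, a nonzero real-analytic function, which has only isolated zeros. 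I expect no substantive obstacle: the corollary is a direct specialization of Theorem \ref{Thm_guide} to a kernel for which the needed non-density of $\cup_k \mathcal N_k$ is immediate from analyticity.
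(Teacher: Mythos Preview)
Your proposal is correct and follows the same route as the paper: you verify the hypothesis of Theorem \ref{Thm_guide} by showing that each $\mathcal N_k$ is at most countable (hence so is $\cup_{k\geq 1}\mathcal N_k$), and then invoke that theorem directly. The paper argues countability straight from \eqref{zero_of_t} and $\alpha_k<0$, while you also supply the analyticity justification via the third-order ODE $x'''+\lambda_k x''+x=0$; this extra detail is sound but not needed, since your final alternative --- reading \eqref{zero_of_t} as the zero set of a nonzero real-analytic function --- is exactly the paper's implicit argument.
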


\begin{proof}
From \eqref{zero_of_t} and the fact that $\alpha_k<0$, we  see that for each $k\in\mathbb{N}^+$, $\mathcal{N}_k$ contains a countable number of points. Thus,  $\cup_{k\geq1}\mathcal{N}_k$
is a countable set, which implies that 
$\mathbb{R}^+\setminus\cup_{k\geq1}\mathcal{N}_k$
is not an empty set.
Now the desired result follows directly from Theorem \ref{Thm_guide}.
\end{proof}

\begin{example}%\label{example_e^alpha_t}
Let $c > 0$ and $\alpha \in \mathbb R$. Consider the kernel: $M(t) = c e^{\alpha t}$ ($t\geq0$). In this case, for each $k\in \mathbb N^+$,  the solution $x_k$ of the equation \eqref{20230318-yubiao-ParametrizedODEwithMemory} satisfies
\begin{align*}
    x^{''} + (\lambda_k - \alpha) x' + (c - \alpha \lambda_k) x = 0
    ~~\text{over}~~  \mathbb R^+;
    ~ x(0) =1,  ~ x'(0) =  - \lambda_k.
\end{align*}
In what follows, for each $k \in \mathbb N^+$,  $s_k$ and $w_k^{\pm}$ denote respectively  the discriminant and the  roots (possibly not real numbers) for the characteristic equation for the above ODE, i.e.,
\begin{align*}
    s_k  : =   (\lambda_k  +  \alpha)^2  - 4 c
    ~~\text{and}~~
    w_k^{\pm}  :=  \frac{
    - (\lambda_k - \alpha) \pm \sqrt{ s_k }
    }{2},
    ~ k\in \mathbb N^+.
\end{align*}
After computations, we obtain that for each $k \in \mathbb N^+$, 
\begin{itemize}
    \item[(a)]  When $s_k \neq 0$, 
    \begin{align*}
        x_k (t) =  \frac{ 
            (w_k^+ - \alpha) e^{ w_k^+ t }   -   (w_k^- - \alpha) e^{ w_k^- t }
        }{
            w_k^+  - w_k^-
        },
        ~ t \geq 0. 
        % R I^{-1} \sin (I t)  + \cos (It) in the case of complex roots
        % R = - (\lambda + \alpha) / 2  % the difference between the real part of roots and \alpha
        % I = \sqrt{ c - (\lambda + \alpha)^2 / 4 }
    \end{align*}
    \item[(b)]   When $s_k = 0$, 
    \begin{align*}
        x_k (t) =  \Big[
            1  -  \frac{(\lambda_k + \alpha) t}{2}  
        \Big]
        e^{ - \frac{(\lambda_k - \alpha) t}{2} }, 
        ~ t \geq 0. 
    \end{align*}
\end{itemize}
Thus, we conclude that for each $k \in \mathbb N^+$, 
\begin{align}\label{20241106-yb-NodelSetsOfODEWithMemory}
    \mathcal{N}_k = 
    \begin{cases}
        \big\{  
            ( \omega_k^+   -  \omega_k^- )^{-1}
            \ln [ (\omega_k^-  -  \alpha)  /  (\omega_k^+  -  \alpha) ]
        \big\},  
        &~~\text{when}~~
            s_k > 0,
        % \\
        % \emptyset, &~~\text{when}~~
        %     s_k > 0 ~~\text{and}~~ \alpha \lambda_k \geq c, 
        \\
        \emptyset, &~~\text{when}~~
            s_k = 0 ~~\text{and}~~ \lambda_k \leq - \alpha,
        \\
        \{ 2/(\lambda_k + \alpha) \},
        &~~\text{when}~~
            s_k = 0 ~~\text{and}~~ \lambda_k > - \alpha,
        \\
        \Big\{ 
            \frac{2}{ \sqrt{-s_k} }
            \Big(
                \text{arccot}\, \frac{
                     \lambda_k + \alpha 
                }{ \sqrt{-s_k} }
                +  l\pi
            \Big)   
             \in\mathbb{R}^+  
            ~:~  l \in \mathbb{N} 
        \Big\} , 
        &~~\text{when}~~
            s_k < 0.
    \end{cases}
\end{align}
%where $b_k := \sqrt{ c - (\lambda + \alpha)^2 / 4 }$.

% $(a)$ when $k$ satisfies $-\alpha-2<\lambda_k<-\alpha+2$,
% \begin{align}\label{e_period}
%    \mathcal{N}_k = \Big\{\frac{\arctan\frac{2b_{\alpha,k}}{\alpha+\lambda_k}+l\pi}{b_{\alpha,k}}\in\mathbb{R}^+  ~:~  l\in\mathbb{Z}\Big\},
%    \;\;\mbox{with}\;\;b_{\alpha,k} := \sqrt{1 - \frac{(\alpha+\lambda_k)^2}{4}};
% \end{align}

% $(b)$ when $k$ satisfies $\lambda_k=-\alpha+2$,
% \begin{align}\label{e_1}
%    \mathcal{N}_k = \{1\};
% \end{align}

% $(c)$ when $k$ satisfies $\lambda_k=-\alpha-2$,
% \begin{align}\label{e_1_empty}
%    \mathcal{N}_k = \emptyset;
% \end{align}

% $(d)$ when $k$ satisfies  that $\lambda_k<-\alpha-2$ or $\lambda_k>-\alpha+2$ and that  $\alpha\lambda_k\geq1$,
% \begin{align}\label{e_empty}
%    \mathcal{N}_k = \emptyset;
% \end{align}

% $(e)$ when $k$ satisfies that $\lambda_k<-\alpha-2$ or $\lambda_k>-\alpha+2$ and that  $\alpha\lambda_k<1$,
% \begin{align}\label{e_single}
%    \mathcal{N}_k = \Big\{\frac{1}{\omega_{\alpha,k_{+}}-\omega_{\alpha,k_{-}}}\ln \frac{\omega_{\alpha,k_{+}}+\lambda_{k}}{\omega_{\alpha,k_{-}}+\lambda_{k}}\Big\},
% \end{align}
% where
% \begin{align*}
%  \omega_{\alpha,k_+} := \frac{\alpha-\lambda_k + \sqrt{(\alpha+\lambda_k)^2 - 4}}{2} \quad \text{and} \quad \omega_{\alpha,k_-} := \frac{\alpha-\lambda_k - \sqrt{(\alpha+\lambda_k)^2 - 4}}{2}.
% \end{align*}
\end{example}

\begin{corollary}%\label{corollary5.15}
%Let $c > 0$ and $\alpha \in \mathbb R$.
Let $M(t)= ce^{\alpha t}$ ($t\geq0$), with $c > 0$ and $\alpha \in \mathbb R$. 
Then for any  $t_1, t_2 \in \mathbb R^+$, with  $0< t_2 - t_1 < \pi / \sqrt{c}$, 
%for any $m\geq2$ and any strictly increasing sequence $\{t_j\}_{j=1}^m \subset (0,+\infty)$ with 
% \begin{align}\label{e_adjoint}
%     t_j-t_{j-1}  <   \pi / \sqrt{c}
%     ~~\text{for some}~~ j \in\{2,\dots,m\},
% \end{align}
  the equation \eqref{20240215-OriginalEquation} has the backward uniqueness at $\{t_j\}_{j=1}^2$.
\end{corollary}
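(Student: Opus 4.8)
The plan is to apply Theorem \ref{Thm_multipont} with $m=2$. By that theorem, it suffices to prove that for every $k\in\mathbb N^+$ at least one of $t_1,t_2$ lies outside $\mathcal N_k$; equivalently, it suffices to show that no $\mathcal N_k$ can contain two distinct points whose distance is strictly less than $\pi/\sqrt c$. So the whole argument reduces to analysing the gaps between consecutive zeros of the solutions $x_k$, using the explicit description \eqref{20241106-yb-NodelSetsOfODEWithMemory}. First I would dispose of the easy cases: when $s_k\ge 0$ and $\lambda_k\le-\alpha$, the set $\mathcal N_k$ is either empty or a singleton, so it certainly cannot contain two points at all. When $s_k>0$, or $s_k=0$ with $\lambda_k>-\alpha$, again $\mathcal N_k$ is a singleton and the same remark applies. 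Thus in all of these cases the implication holds vacuously, and only the oscillatory case $s_k<0$ requires genuine work.

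In the oscillatory case, \eqref{20241106-yb-NodelSetsOfODEWithMemory} gives
$\mathcal N_k=\Big\{\tfrac{2}{\sqrt{-s_k}}\big(\operatorname{arccot}\tfrac{\lambda_k+\alpha}{\sqrt{-s_k}}+l\pi\big):l\in\mathbb N\Big\}\cap\mathbb R^+$, so consecutive elements of $\mathcal N_k$ are spaced exactly $2\pi/\sqrt{-s_k}$ apart. Hence the minimal distance between two distinct points of $\mathcal N_k$ is $2\pi/\sqrt{-s_k}$, and it is enough to check that $2\pi/\sqrt{-s_k}\ge \pi/\sqrt c$, i.e. that $-s_k\le 4c$. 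Recalling $s_k=(\lambda_k+\alpha)^2-4c$, we have $-s_k=4c-(\lambda_k+\alpha)^2\le 4c$, which is exactly what is needed (and the inequality is strict unless $\lambda_k=-\alpha$, in which case $-s_k=4c$ and the gap is precisely $\pi/\sqrt c$, still not smaller than $t_2-t_1$). Therefore, under the hypothesis $0<t_2-t_1<\pi/\sqrt c$, no single $\mathcal N_k$ can contain both $t_1$ and $t_2$, so for each $k$ one of the two time instants avoids $\mathcal N_k$. By Theorem \ref{Thm_multipont}, the equation \eqref{20240215-OriginalEquation} has the backward uniqueness at $\{t_1,t_2\}$.

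The only mildly delicate point — the "main obstacle", such as it is — is making sure the quantity $\sqrt{-s_k}$ is uniformly bounded below so that the gap bound $2\pi/\sqrt{-s_k}$ is not just $\ge\pi/\sqrt c$ but genuinely controls the situation; in fact this is automatic, since $-s_k\le 4c$ forces $\sqrt{-s_k}\le 2\sqrt c$ and hence $2\pi/\sqrt{-s_k}\ge \pi/\sqrt c$ with no further assumptions, so there is really nothing to worry about. One should also note that $\mathcal N_k$ might contain fewer than two positive points (if only one element of the arithmetic progression is positive), but that only makes the conclusion easier. Assembling these observations and invoking Theorem \ref{Thm_multipont} completes the proof.
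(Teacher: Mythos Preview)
Your proposal is correct and takes essentially the same approach as the paper: both reduce to Theorem \ref{Thm_multipont}, dispose of the cases $s_k\ge 0$ by noting that $\mathcal N_k$ then has at most one element, and in the oscillatory case $s_k<0$ bound the gap between consecutive zeros from below by $\pi/\sqrt{c}$ via $-s_k=4c-(\lambda_k+\alpha)^2\le 4c$. The paper writes this same bound as $\pi/\sqrt{c-(\lambda_k+\alpha)^2/4}\ge\pi/\sqrt{c}$, which is algebraically identical to your $2\pi/\sqrt{-s_k}\ge\pi/\sqrt{c}$.
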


\begin{proof}
Let $t_1, t_2 \in \mathbb R^+$ satisfy  $0< t_2 - t_1 < \pi / \sqrt{c}$.
We arbitrarily fix $k\in\mathbb{N}^+$ such that 
\begin{align}\label{5.12-w-11-15}
    c > (\lambda_k + \alpha)^2 /4. 
\end{align}
Then it follows from \eqref{20241106-yb-NodelSetsOfODEWithMemory} that the zero points of $x_k$ are periodic with the period 
\begin{align*}
    \pi /  \sqrt{ c -  (\lambda_k + \alpha)^2 /4 }
    \geq  \pi / \sqrt{c}. 
\end{align*}
Since $0< t_2 - t_1 < \pi / \sqrt{c}$, the above shows that there is $j_k\in\{1,2\}$ such that $  t_{j_k} \notin \mathcal{N}_k$.

Next, by making use of \eqref{20241106-yb-NodelSetsOfODEWithMemory} again, we see that for any $k$ (in $\mathbb{N}^+$) that does not satisfy  
\eqref{5.12-w-11-15}, $\mathcal{N}_k$ contains at most one point. 

Therefore, for any $k\in\mathbb{N}^+$, there is $j_k\in\{1,2\}$ such that $  t_{j_k} \notin \mathcal{N}_k$. This, together with Theorem \ref{Thm_multipont}, implies that the equation \eqref{20240215-OriginalEquation} has the backward uniqueness at $\{t_j\}_{j=1}^{2}$. This completes the proof. 
\end{proof}

\section{Acknowledgments}

The authors were partially supported by the National Natural Science Foundation of China under grants 12171359 and 12371450. The third author was also funded by the Humboldt Research Fellowship for Experienced Researchers program from the Alexander von Humboldt Foundation.

\end{document}